\theoremstyle{definition}
\newtheorem{thm}{Theorem}[section]
\newtheorem{propo}{Proposition}[section]
\newtheorem{defn}{Definition}[section]
\newtheorem{lemma}{Lemma}[section]
\newtheorem{remark}{Remark}[section]
\renewcommand{\iff}{\leftrightarrow}
\renewcommand{\models}{\vDash}
\newcommand{\dom}{\text{dom}}
\newcommand{\rng}{\text{rng}}
\newcommand\LL{\mathcal{L}}
\newcommand\RR{\mathbb{R}}
\newcommand\K{\mathbb{K}}
\newcommand\Q{\mathbb{Q}}
\newcommand\PP{\mathbb{P}}
\newcommand\HH{\mathcal{H}}
\newcommand\MM{\mathcal{M}}
\newcommand\NN{\mathcal{N}}
\numberwithin{equation}{subsection}
\title{The Combinatorics of Falsification and Hypothesis Testing}
\author{Reid Dale \\ \href{mailto:reiddale@berkeley.edu}{reiddale@berkeley.edu}}
\date{September 2022}
\begin{document}

\maketitle

\begin{abstract}
    The present paper is concerned with the question of how falsifiable a single proposition is in the short and long run. Formal Learning theorists such as Schulte and Juhl \autocite{schulte1996epistemology} have argued that long-run falsifiability is characterized by the topological notion of nowhere density in a suitable topological space. I argue that the short-run falsifiability of a hypothesis is in turn characterized by the VC finiteness of the hypothesis. Crucially, VC finite hypotheses correspond precisely to definable sets in NIP structures. I end the chapter by giving rigorous foundations for Mayo's \autocite{mayo2005evidence} conception of severe testing by way of a combinatorial, non-probabilistic notion of \textit{surprise}. VC finite hypotheses again appear as the hypotheses with guaranteed short-run surprise bounds. Therefore, NIP theories and VC finite hypotheses capture the notion of short-run falsifiability. 
\end{abstract}

\section{Introduction}

In this paper we begin by assessing the account of falsification given by Formal Learning Theorists Juhl and Schulte \autocite{schulte1996epistemology}. For them, a hypothesis $\HH$ is identified with a set of possible worlds. They consider the case where $\HH \subset 2^{\omega}$. For them, such a hypothesis is \textit{always falsifiable} provided that regardless of the results of some finite collection of observations, the hypothesis still has the potential to be falsified by some further collection of observational data. This, they show, is equivalent to the topological notion of the \textit{nowhere density} of $\HH$ inside $2^{\omega}$ equipped with the product topology. This account gives a good account of \textit{long-run} falsifiability, but fails to give a satisfactory account of \textit{short-run} falsification as there are no bounds on \textit{how long} it might take an agent to witness a crucial experiment. I define a \textit{sample} along a set $X$ is given by:

\begin{defn}
A \textit{sample} of $X$ is an injective function $f: \omega \to X$. A sample $f$ is \textit{full} provided $f$ is bijective. 
\end{defn}

Now, relative to a sample $f$ we define a notion of the \textit{surprise} of a hypothesis along the sample as follows:

\begin{defn}
Let $X$ be a set, $f:\omega \to X$ a sample of $X$, and $\HH\subset 2^X$ a hypothesis. The \textit{surprise} of $\HH$ is the function
\begin{equation*}
S(\HH,f,n) = 1 - \frac{|\HH{\upharpoonright}_{f([n])}|}{2^{|f([n])|}}.\qedhere
\end{equation*}

\end{defn}

The surprise of $\HH$ along the enumeration $f$ is the relative proportion of the states of the world incompatible with $\HH$. If $\HH$ is highly surprising, then it is compatible with only a small number of observations along the sample.

Very closely related to the NIP theories discussed in the static case of falsification, the VC finite hypothesis classes are characterized by the ability to obtain uniform bounds on surprise independent of sample.

Finally, we turn our attention to the work of Mayo \autocite{mayo2005evidence}\autocite{mayo2018severe}, who advocates for a strengthening of Null Hypothesis Statistical Testing as the foundation of statistical testing called \textit{severe testing}. Mayo and other error statisticians ask the question

\begin{quote}
When do data $x$ provide good evidence for / a good test of hypothesis $H$?
\end{quote}

The error statistician will invoke some form of a \textbf{Severity Principle} to answer this question:
\begin{quote}
(Weak Severity Principle) Data $x$ \textbf{does not} provide good evidence for $H$ if $x$ is the result of a \textbf{test procedure} $T$ with very low probability of uncovering the falsity of $H$\autocite[21]{mayo2010introduction}. 
\end{quote}

A converse is given by:
\begin{quote}
(Full Severity Principle) Data $x$ provides good evidence for $H$ to the extent that test $T$ has been \textbf{severely passed} by $H$\autocite[21]{mayo2010introduction}.
\end{quote}

However, the definition of severe testing via probabilistic notions is elusive. To round out the discussion of falsification, I show that the notion of surprise I defined in the context of always falsifiability is well-suited to give an account of a \textit{well-defined} combinatorial analogue of severe testing I term \textit{severe surprise}.

\begin{defn}
Let $f: \omega \to X$ be a sample, $\HH\subset 2^X$ a hypothesis, $n\in \omega$, and $\epsilon > 0$. We say that $(\HH,f,n)$ is \textit{severely surprising} at level $\epsilon$ provided the observed data $x\in \HH{\upharpoonright}_{f([n])}$,
\[ S(\HH,f,n) > 1-\epsilon\]
and
\begin{equation*}
 S(\HH,f,n) > S(\HH^c,f,n). \qedhere
\end{equation*}
\end{defn}

Crucially, surprise is at its core a non-probabilistic notion. Key to this is the observation that surprise is subadditive:

\begin{propo}
For all $\HH,f,n$,
\[S(\HH,f,n) + S(\HH^c,f,n) \leq  1 \qedhere\]
\end{propo}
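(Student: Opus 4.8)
The plan is to reduce the stated inequality to a purely combinatorial statement about restriction sets and then verify that statement by a short extension argument. Unwinding the definition of $S$, the claim $S(\HH,f,n) + S(\HH^c,f,n) \leq 1$ is equivalent, after clearing the common denominator $2^{|f([n])|}$, to
\[ |\HH{\upharpoonright}_{f([n])}| + |\HH^c{\upharpoonright}_{f([n])}| \geq 2^{|f([n])|}. \]
Since $f$ is injective we have $|f([n])| = n$, so the right-hand side is exactly the total number of binary functions on the finite set $f([n])$. Thus it suffices to show that the two restriction sets $\HH{\upharpoonright}_{f([n])}$ and $\HH^c{\upharpoonright}_{f([n])}$ jointly exhaust $2^{f([n])}$.

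The key step is the observation that every partial pattern on $f([n])$ is realized by some total world lying in one of the two classes. Fix any $g \in 2^{f([n])}$. Because $f([n]) \seq X$, we may extend $g$ to a total function $h \in 2^X$ by assigning arbitrary values off $f([n])$. Since $\HH$ and $\HH^c$ partition $2^X$, this $h$ lies in exactly one of $\HH$ or $\HH^c$; in either case $g = h{\upharpoonright}_{f([n])}$ witnesses that $g \in \HH{\upharpoonright}_{f([n])} \cup \HH^c{\upharpoonright}_{f([n])}$. As $g$ was arbitrary, the union of the two restriction sets is all of $2^{f([n])}$.

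I would then conclude with the elementary union bound
\[ 2^{|f([n])|} = \left| \HH{\upharpoonright}_{f([n])} \cup \HH^c{\upharpoonright}_{f([n])} \right| \leq |\HH{\upharpoonright}_{f([n])}| + |\HH^c{\upharpoonright}_{f([n])}|, \]
which is precisely the reduced inequality. There is no genuine obstacle here; the only conceptual point worth flagging is why the bound is subadditive rather than an equality. Equality would force the two restriction sets to be disjoint, but a single pattern $g$ can perfectly well admit one extension into $\HH$ and a different extension into $\HH^c$, so the sets generically overlap. It is exactly this overlap — the patterns realizable in both classes — that produces the slack in $\leq 1$, and one could even identify the defect with $|\HH{\upharpoonright}_{f([n])} \cap \HH^c{\upharpoonright}_{f([n])}|$ divided by $2^{|f([n])|}$.
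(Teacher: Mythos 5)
Your proof is correct and is essentially the paper's argument: the paper derives the inequality as an immediate consequence of the subadditivity of the semi-measure $\mu_{f,n}(\HH) = |\HH{\upharpoonright}_{f([n])}|/2^n$, and your extension/union-bound computation (including identifying the slack with $|\HH{\upharpoonright}_{f([n])} \cap \HH^c{\upharpoonright}_{f([n])}|/2^{|f([n])|}$) is exactly the content of that lemma, just inlined.
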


and, in fact, any dense-codense pair $\HH, \HH^c \subset 2^{X}$ surprise 0 along any subsample. This is due to the simple fact that $\HH_1\cap \HH_2 = \varnothing$ does not entail that the restriction of the hypotheses to a finite set $X_0$, $\HH_1|_{X_0}$ and $\HH_2|_{X_0}$, are disjoint. 

Under this definition of severe surprise, one can show that VC finite classes are in fact severely surprising if true, uniformly in the size of the sample.

Thus, VC finiteness emerges as \textit{the} core notion of dynamic case of falsification, being robust under arbitrary samples and uniquely endowed with felicitous finite-sample bounds.

\subsection{The Formal Learning Theoretic Picture of Falsification}

The learning-theoretic analysis of falsification given by Schulte and Juhl identifies \textit{always falsifiability} of a hypothesis $\HH \subset 2^X$ with the notion of \textit{nowhere density} in the usual product topology on $2^X$.\autocite[10]{schulte1996epistemology}

The framework of Formal Learning Theory, especially as developed by Kelly \autocite{kelly1996logic}, provides a formal model of learning through observation. Schulte and Juhl \autocite{schulte1996epistemology} leverage this framework to give a topological characterization of Popperian falsification. On this model, an agent is idealized as being fed a countably infinite number of observations encoded by natural numbers and determining at each step $n$ whether or not some property $P$ holds of observation $x_n$: if $P(x_n)$, output $1$ and if $\neg P(x_n)$, output $0$. Such a sequence  is called a \textit{data stream}. Mathematically speaking, a data stream can be thought of simply as an element of Cantor space $2^{\omega}$. An \textit{empirical hypothesis} is simply a set of data streams, and thus a subset of $2^{\omega}$.

Popper's solution to the demarcation problem says that the distinguishing feature of a scientific theory--construed as an empirical hypothesis--is its \textit{falsifiability}. As Schulte and Juhl explain, a weak form of falsification is that under ideal circumstances the hypothesis can be conclusively ruled out on the basis of some observed relation or relations termed a \textit{crucial experiment} of the theory. However, if the crucial experiment does not rule out the theory, it may be the case that there are no other crucial experiments to run.

A more robust notion of falsifiability---what they term \textit{always falsifiability}---demands a preponderance of crucial experiments: given any finite collection of observations there exists a crucial experiment which may conceivably be run in the future. This notion expresses the idea that the scientific theory might never be \textit{confirmed} at any finite time, since there are always potential observational paths refuting it in the future. Schulte and Juhl demonstrate that the always falsifiability of a hypothesis $\mathcal{H} \subset 2^{\omega}$ is equivalent to that hypothesis' \textit{nowhere density} in the usual topology on $2^{\omega}$.

Within formal learning theory, a \textit{world} is an element of $2^{\omega}$ and a hypothesis $\HH$ is identified with the subset $\HH\subset 2^{\omega}$ of worlds in which $\HH$ holds. 

Given a hypothesis $\HH\subset 2^{\omega}$, we construct a two-sorted structure $\MM_{\HH}$ in the language $\LL = \{R(x,y),O(x),W(y)\}$ as follows:
\begin{enumerate}
    \item The domain of $\MM_{\HH}$ is the disjoint union $\omega \cup \HH$,
    \item The predicate $O$ consists of all of the \textit{observations}: $\MM_{\HH} \models O(x)$ just in case $x\in \omega$
    \item The predicate $W$ consists of all of the \textit{worlds}: $\MM_{\HH}\models W(y)$ just in case $y\in \HH$.
    \item $\MM_{\HH} \models R(x,y)$ just in case $\MM_{\HH}\models O(x) \wedge W(y)$,$x\in y$, and $y\in \HH$.
\end{enumerate}
In other words, the structure on $\MM_{\HH}$ is the bipartite graph on $\omega$ and $\HH$ with each world $w\in \HH$ encoding itself:
\[R(\MM_{H},w) = w.\]

It is worth noting that we can encode a lot of information into this framework. For instance, consider the structure $(\Q,<)$ with $<$ as the usual order on $\Q$. Then, enumerating $\Q^2$ as $\omega$, we may regard the partitioned formula \[\varphi(x;y_1,y_2) = x>y_1 \wedge x<y_2\] as a hypothesis $\HH_{\varphi}\subset 2^{\omega}$. We identify an $h\in \HH$ with any one of its codes; that is, $h = \varphi(x;h_1,h_2)$ for some $h_1,h_2 \in \Q$.
We then have a definable interpretation of the bipartite graph
\[\Q \cup \HH_{\varphi}(\Q) \]
with 
\[R(x,h) \iff x\in h \iff (x\in \Q \wedge ( \varphi(x;h_1,h_2)).\]

To make this fully model theoretic, the induced structure we study would not be restricted to only countable models; instead we would concern ourselves with large, sufficiently saturated $\MM \in \K$ and look at the induced bipartite structure with domain
\[\MM \cup \HH_{\varphi}.\]

We see that this is even necessary to capture all intervals in $\Q$ with real endpoints; restricting only to parameters $h_1,h_2\in \Q$ we have only countable many elements in $\HH_\varphi(\Q)$, but $\HH_{\varphi}(\RR)\cap \Q \subset 2^\Q$ is strictly larger.


The above construction gives us a way to convert learning questions in model theory with the setup of Formal Learning Theory. 

To maintain consistency with the Formal Learning Theory literature, we will work primarily in the standard setting of hypotheses $\HH\subset 2^{\omega}$, knowing that we may choose to encode mathematical structures into this framework as needed.

\subsection{Popper Dimension, VC dimension, and the Topology of Falsification}

The learning-theoretic analysis of falsification given by Schulte and Juhl identifies \textit{always falsifiability} of a hypothesis $\HH \subset 2^X$ with the notion of \textit{nowhere density} in the usual product topology on $2^X$.\autocite[10]{schulte1996epistemology}

There is an equivalent description of the notion of always falsifiability in terms of the fundamental machine-learning theoretic notion of shattering.

\begin{defn}
Let $\HH \subset 2^X$ be a hypothesis and $X_0 \subset X$. Then $\HH$ is said to shatter $X_0$ provided that the restriction of $\HH$ to $X_0$,
\[ \HH{\upharpoonright}_{X_0} = \{h{\upharpoonright}_{X_0}\,|\, h\in \HH \},   \]
satisfies
\[\HH{\upharpoonright}_{X_0} = 2^{X_0}.\qedhere\]
\end{defn}

One may give an equivalent definition of always falsifiability in terms of shattering, following the account of \autocite{seldin2013relations}.

\begin{defn} 
Let $\HH\subset 2^X$ be a hypothesis. Let $f: X_0 \to 2$ be a function defined on a finite subset $X_0 \subset X$. Given such a function, let $\HH_f = \{h\in \HH\,|\, h\supset f \}$ be the set of functions in $\HH$ extending $f$.

The Popper dimension $\delta_P$ of $\HH$ relative to $f$ is the size of the smallest subset of $X\setminus \dom(f)$ \textit{not} shattered by $\HH$. More precisely:
\[\delta_{P}(\HH,f) = \min\left\{|Y|\,\mid\, \text{$Y \subset (X\setminus \dom(f)) $ is not shattered by $\HH_f$ } \right\} .  \]
We say that $\HH$ is \textit{hereditarily Popper finite} provided $\delta_P(\HH,f)$ is finite for all $f:X_0\to \{0,1\}$ with finite domain.
\end{defn}
 
\begin{propo}
$\HH$ is always falsifiable if and only if $\HH$ is hereditarily Popper finite. 
\end{propo}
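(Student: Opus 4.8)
The plan is to treat \emph{always falsifiability} through its Schulte--Juhl characterization as nowhere density in the product topology on $2^X$, and then to show that nowhere density and hereditary Popper finiteness are literally the same condition once both are unwound into statements about finite partial functions. No genuine analysis is required; the content is bookkeeping of quantifiers together with one standard topological fact.

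First I would record the basic-open-set description of the topology: a basic open set of $2^X$ is a cylinder $[s] = \{h \in 2^X : h \supseteq s\}$ for a finite partial function $s$, and $[s'] \seq [s]$ precisely when $s' \supseteq s$. From this, $\HH$ is nowhere dense iff its closure has empty interior, iff no cylinder is contained in $\overline{\HH}$, iff for every finite partial $s$ there is a finite partial $s' \supseteq s$ with $[s'] \cap \HH = \emptyset$, i.e. with $\HH_{s'} = \emptyset$. I will call this the \emph{refutation condition at $s$}: every finite observation $s$ admits a finite refuting extension. (This is also the verbatim reading of the informal demand that every finite data segment leave open a future crucial experiment.)

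Next I would unwind hereditary Popper finiteness. Fix a finite partial $f$. A finite $Y \seq X \setminus \dom(f)$ fails to be shattered by $\HH_f$ exactly when some $g : Y \to 2$ is realized by no restriction of a member of $\HH_f$; since $\dom(f)$ and $Y$ are disjoint, this says exactly that no $h \in \HH$ extends $f \cup g$, i.e. $\HH_{f \cup g} = \emptyset$. As $f \cup g$ ranges over all finite partial extensions $f' \supseteq f$ (with $Y = \dom(f') \setminus \dom(f)$ and $g = f' \upharpoonright Y$), "some finite $Y$ is unshattered" is equivalent to "some finite $f' \supseteq f$ has $\HH_{f'} = \emptyset$." Finally, $\delta_P(\HH,f)$ is finite iff the defining minimum is attained by a finite set, i.e. iff some finite $Y$ is unshattered: if every finite $Y$ is shattered, then the smallest unshattered set, when one exists at all, is infinite, so $\delta_P(\HH,f) \geq \aleph_0$. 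Hence $\delta_P(\HH,f) < \infty$ is exactly the refutation condition at $f$.

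Comparing the two characterizations, both "$\HH$ nowhere dense" and "$\HH$ hereditarily Popper finite" assert the refutation condition at every finite partial function, so the equivalence follows. The only point needing care---the main obstacle, such as it is---is the empty-minimum convention in $\delta_P$: I must check that "$\delta_P(\HH,f)$ finite" captures "some \emph{finite} unshattered $Y$ exists" and cannot hold when only infinite unshattered sets are present. The eventually-zero sequences (every finite set shattered, yet $\HH$ dense, hence not nowhere dense, with $\delta_P(\HH,\emptyset) = \aleph_0$) confirm the conventions line up correctly in the failure case.
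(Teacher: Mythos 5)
Your proposal is correct and follows essentially the same route as the paper: both pass through the Schulte--Juhl identification of always falsifiability with nowhere density and both rest on the observation that a finite set $Y$ unshattered by $\HH_f$ is the same datum as a finite extension $f'\supseteq f$ with $\HH_{f'}=\varnothing$, i.e.\ a nonempty basic open subset of $U_f$ missing $\HH$. Your presentation as a single unwinding over finite partial functions (rather than the paper's two contrapositive directions over strings) is a cosmetic difference, and your care about the $\min$ convention when every finite set is shattered is a welcome but minor tightening.
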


\begin{proof}
Suppose that $\HH$ is hereditarily Popper finite. To show that $\HH$ is nowhere dense, first suppose that $U\subset 2^{X}$ is a basic open set, say $U= U_s$ for some string $s$. We need to show that $\HH \cap U$ is not dense in $U$. It suffices to show that there exists a nonempty basic open $V\subset U$ such that $\HH \cap V = \varnothing$. Since $\HH$ is hereditarily Popper finite, there is a finite $n$ such that $\delta_P(\HH,s) = n < \infty$. Then there is a string $t \supset s$ of length $|s|+n+1$ such that $t \notin \HH_s$. Thus $\HH \cap U_t = \varnothing$. Since $U$ was arbitrary basic open, $\HH$ is nowhere dense.

Conversely, if $\HH$ is not hereditarily Popper finite then there exists a finite subset $X_0\subset X$ and $f:X_0\to \{0,1\}$ such that all finite subsets $Y_0 \subset X\setminus X_0$ are shattered by $\HH_f$. This precisely says that the nonempty basic open set $U_f$ is such that $\HH \cap U_f$ is dense. Thus $\HH$ is not nowhere dense.
\end{proof}

A stronger condition than hereditary Popper finiteness---the context of Vapnik's PAC learnability---is that of VC finite classes.

\begin{defn}
Let $\HH \subset 2^X$ be a hypothesis. The VC dimension of $\HH$ is the maximal size of a set shattered by $\HH$:
\[ \delta_{VC}(\HH) = \max\{|Y|\,\mid\, \text{$Y \subset X$ is shattered by $\HH$}  \}. \qedhere\]
\end{defn} 

\begin{propo}\label{prop:vc-popper}
\begin{enumerate}
    \item If $\HH$ is VC finite then $\HH$ is hereditarily Popper finite. 
    \item There exist hereditarily Popper finite $\HH$ which are not VC finite.
\end{enumerate}
\end{propo}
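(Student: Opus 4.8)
The plan is to derive (1) from a short monotonicity argument and to establish (2) by an explicit construction.

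For (1), I would first record the elementary containment $\HH_f \subseteq \HH$, valid for every $f : X_0 \to \{0,1\}$ with finite domain, since $\HH_f$ consists of exactly those $h \in \HH$ extending $f$. The decisive observation is that shattering is monotone along this containment: if $\HH_f{\upharpoonright}_Y = 2^Y$ then a fortiori $\HH{\upharpoonright}_Y = 2^Y$, so every set shattered by $\HH_f$ is already shattered by $\HH$. Writing $d = \delta_{VC}(\HH) < \infty$, it follows that $\HH_f$ shatters no set of size $d+1$. Because $X = \omega$ is infinite, $X \setminus \dom(f)$ contains some $Y$ with $|Y| = d+1$, and this $Y$ is not shattered by $\HH_f$; hence $\delta_P(\HH,f) \le d+1 < \infty$. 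As $f$ was arbitrary, $\HH$ is hereditarily Popper finite.

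For (2), I would exhibit a nowhere dense hypothesis of infinite VC dimension; by the equivalence established above between nowhere density and hereditary Popper finiteness, this suffices. Partition $\omega$ into consecutive finite blocks $B_1, B_2, \ldots$ with $|B_n| = n$ and set
\[ \HH = \{\, h \in 2^\omega : \{k : h(k) = 1\} \subseteq B_n \text{ for some } n \,\}, \]
the data streams whose support is confined to a single block. For each $n$ the functions supported on $B_n$ realize every pattern in $2^{B_n}$, so $\HH{\upharpoonright}_{B_n} = 2^{B_n}$ and $B_n$ is shattered; since $|B_n| = n \to \infty$, the VC dimension is infinite and $\HH$ is not VC finite.

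It remains to verify nowhere density, which is the crux. Given a basic open set $U_s$ with $s$ of length $\ell$, I would extend $s$ to a string $t$ that places a $1$ in two distinct blocks: if $s$ already contains a $1$ (necessarily in some early block $B_i$), append $0$s and then a single $1$ located inside a block $B_j$ lying entirely beyond position $\ell$, which exists because the blocks exhaust $\omega$; if $s$ is identically $0$, place $1$s in two distinct blocks beyond position $\ell$. In both cases any $h \in \HH$ extending $t$ would have support meeting two blocks, which is impossible, so $\varnothing \neq U_t \subseteq U_s$ and $U_t \cap \HH = \varnothing$. Thus every basic open set has a nonempty basic open subset disjoint from $\HH$, witnessing nowhere density. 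The only real obstacle is the bookkeeping of this case split; locating a fresh block beyond position $\ell$ is routine.
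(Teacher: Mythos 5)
Your argument is correct. For part (1) you are doing essentially what the paper does: the paper cites the inequality $\delta_P(\HH,f) \le \delta_{VC}(\HH)+1$ from Seldin--Sch\"olkopf, and your monotonicity observation ($\HH_f \subseteq \HH$, so any set shattered by $\HH_f$ is shattered by $\HH$) together with the availability of a $(d+1)$-element subset of the infinite set $X\setminus\dom(f)$ is exactly the content of that lemma, spelled out. For part (2) you take a genuinely different route. The paper's witness is $\HH = \{h \in 2^\omega : h(n)=0 \text{ for all even } n\}$, which is VC infinite because it shatters the infinite set of odd numbers, and hereditarily Popper finite because every even singleton outside $\dom(f)$ is an unshattered set of size $1$, so $\delta_P(\HH,f)=1$ uniformly. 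Your block construction $\HH = \bigcup_n 2^{B_n}$ shatters only finite sets yet has infinite VC dimension, which is a strictly more delicate witness; it is in fact the same example the paper deploys later to show that VC finiteness does not characterize the hereditarily nowhere dense classes. The trade-off: the paper's example gives a one-line verification of hereditary Popper finiteness with an explicit uniform bound on $\delta_P$, whereas yours requires the two-block case analysis and a detour through the equivalence of nowhere density with hereditary Popper finiteness (which is legitimate, since that equivalence is established in the immediately preceding proposition), but in exchange demonstrates that VC infiniteness need not come from an infinite shattered set. Both verifications go through; your case split is handled correctly, including the point that a block lying entirely beyond position $\ell$ is automatically distinct from any block meeting $[0,\ell)$.
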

\begin{proof}
\begin{enumerate}
    \item This is essentially \autocite[Lemma 6.1]{seldin2013relations}. By definition, for all finite $x\subset X$ 
    \[\delta_P(\HH,x) \leq VC(\HH) + 1,\] so that if $\HH$ is VC finite then $\HH$ is hereditarily Popper finite. \qedhere
    
\item Let
\[\HH = \{f\in 2^{\omega}\,|\, (\forall n\in \omega \text{ even}) f(n) = 0 \}.\]
This set is hereditarily Popper finite as $\HH$ shatters \textit{no} set containing an even number $n$, but is VC infinite as $\HH$ shatters the collection of odd integers. \qedhere
\end{enumerate}
\end{proof}

Thus the VC finiteness of a hypothesis constitutes a stronger notion of falsifiability than that of always falsifiability. It turns out that the added constraints of VC finiteness are precisely what are needed to yield sample-independent bounds on the \textit{prevalence} of crucial experiments. 

\subsection{Surprise and Observational Studies} 

Over countable data streams, one can define a probability-independent notion of the \textit{surprise} of a hypothesis $\HH\subset 2^X$.

\begin{defn}
Let $X$ be countable. A \textit{sample} of $X$ is an injective function $f: \omega \to X$. A sample $f$ is \textit{full} provided $f$ is bijective.\end{defn}

\begin{defn}
Let $X$ be countable, $f:\omega \to X$ a sample of $X$, and $\HH\subset 2^X$ a hypothesis. The \textit{surprise} of $\HH$ is the function
\[ S(\HH,f,n) = 1 - \frac{|\HH{\upharpoonright}_{f([n])}|}{2^{|f([n])|}}. \qedhere\]
\end{defn}
The surprise of $\HH$ along the enumeration $f$ is the relative proportion of the states of the world incompatible with $\HH$. Surprise is a quantitative, probability-independent measure of falsifiability:

\begin{propo}
Suppose that $X$ is countable, $f:\omega\to X$ is a sample, and $\HH\subset 2^X$ is a hypothesis. Then there is a crucial experiment of $\HH$ along $f$ at stage $n$ if and only if $S(\HH,f,n) > 0$.
\end{propo}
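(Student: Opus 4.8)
The plan is to reduce both sides of the biconditional to the single combinatorial statement that the restriction $\HH{\upharpoonright}_{f([n])}$ is a \emph{proper} subset of the full cube $2^{f([n])}$. First I would fix the operative reading of a crucial experiment of $\HH$ along $f$ at stage $n$: it is a possible observation pattern $s \in 2^{f([n])}$ realized by no world in $\HH$, i.e.\ some $s \in 2^{f([n])} \setminus \HH{\upharpoonright}_{f([n])}$. Such an $s$ conclusively rules out $\HH$ once the first $n$ sample points have been observed, which is exactly the falsificationist content of ``crucial experiment.'' Hence the left-hand side holds if and only if $\HH{\upharpoonright}_{f([n])} \neq 2^{f([n])}$.

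For the right-hand side, since $f$ is injective the image $f([n])$ is a finite subset of $X$ with $|f([n])| = |[n]| = n$, so the ambient cube $2^{f([n])}$ has exactly $2^{|f([n])|}$ elements. By the definition of the restriction, $\HH{\upharpoonright}_{f([n])} \subseteq 2^{f([n])}$, so $|\HH{\upharpoonright}_{f([n])}| \leq 2^{|f([n])|}$ and consequently $S(\HH,f,n) \geq 0$ always. The cardinality inequality is \emph{strict} exactly when the inclusion is proper, and $|\HH{\upharpoonright}_{f([n])}| < 2^{|f([n])|}$ is equivalent to $\frac{|\HH{\upharpoonright}_{f([n])}|}{2^{|f([n])|}} < 1$, i.e.\ to $S(\HH,f,n) = 1 - \frac{|\HH{\upharpoonright}_{f([n])}|}{2^{|f([n])|}} > 0$.

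Chaining the two reductions yields the proposition: a crucial experiment of $\HH$ along $f$ exists at stage $n$ if and only if $\HH{\upharpoonright}_{f([n])}$ is a proper subset of $2^{f([n])}$, which holds if and only if $S(\HH,f,n) > 0$. The only real subtlety here is definitional rather than mathematical---namely making ``crucial experiment along $f$ at stage $n$'' precise as the existence of an unrealized observation pattern on the first $n$ sample points. Once that reading is adopted, the equivalence is a direct counting identity and there is no substantive obstacle; in particular the argument also re-derives the observation that $\HH$ shatters $f([n])$ precisely when the surprise at stage $n$ vanishes.
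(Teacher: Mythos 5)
Your proof is correct and follows essentially the same route as the paper's: both reduce the existence of a crucial experiment at stage $n$ to the proper inclusion $\HH{\upharpoonright}_{f([n])} \subsetneq 2^{f([n])}$, i.e.\ $|\HH{\upharpoonright}_{f([n])}| < 2^{|f([n])|}$, and then read off $S(\HH,f,n) > 0$ directly from the definition of surprise. You simply make the definitional unpacking of ``crucial experiment'' and the counting step more explicit than the paper does.
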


\begin{proof}
By definition, a crucial experiment occurs just in case $|\HH{\upharpoonright}_{f([n])}| < 2^{|f([n])|}$, which is equivalent to saying that 
\[S(\HH,f,n) > 0. \qedhere\]
\end{proof}

In the case that $\HH$ is VC finite, the Sauer-Shelah lemma allows us to give uniform, enumeration-independent bounds on the surprise of $\HH$. The Sauer-Shelah lemma shows that the \textit{growth function} of a hypothesis class is polynomial once the sample size exceeds the VC dimension of the class:

\begin{lemma}\label{lem:sauer-shelah} Let $\HH$ be a hypothesis class of VC dimension $d$. Then for all $m$, the growth function
\[\tau_{\HH}(m) = \max\{|\HH{\upharpoonright}_{Y}|\,\mid\, \text{$Y\subset X$ and $|Y| = m$}  \} \]
satisfies the inequality
\[\tau_{\HH}(m) \leq \sum\limits_{i=0}^d {m\choose i} .\]
In particular, if $m > d+1$ then 
\[\tau_{\HH}(m) \leq \tau_{\HH}(m) \leq \left(\frac{em}{d}\right)^d.\qedhere\]
\end{lemma}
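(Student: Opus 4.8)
The plan is to prove the Sauer–Shelah lemma in two parts: first the binomial-sum bound $\tau_{\HH}(m) \leq \sum_{i=0}^d \binom{m}{i}$, then the polynomial estimate $\sum_{i=0}^d \binom{m}{i} \leq (em/d)^d$ for $m > d+1$. Since the growth function is defined as a maximum over all $m$-element subsets $Y \subseteq X$, it suffices to fix an arbitrary $Y$ with $|Y| = m$ and bound $|\HH{\upharpoonright}_Y|$ by the binomial sum. Writing $\mathcal{F} = \HH{\upharpoonright}_Y \subseteq 2^Y$, I have reduced everything to a purely finite combinatorial statement: any family $\mathcal{F}$ of subsets of an $m$-element set that shatters no set of size $d+1$ satisfies $|\mathcal{F}| \leq \sum_{i=0}^d \binom{m}{i}$.

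For the combinatorial core I would use the \emph{shifting} (down-compression) argument, which I find cleanest. The key step is to prove the stronger statement that $|\mathcal{F}| \leq |\{Z \subseteq Y : \mathcal{F} \text{ shatters } Z\}|$, i.e. every family has at least as many shattered sets as elements. Granting this, the bound follows immediately: since $\HH$ (and hence $\mathcal{F}$) shatters no set of size exceeding $d$, every shattered $Z$ has $|Z| \leq d$, so the number of shattered subsets of the $m$-element set $Y$ is at most $\sum_{i=0}^d \binom{m}{i}$. To establish the stronger inequality I would induct on $|Y| = m$, the base case $m = 0$ being trivial. For the inductive step, fix an element $a \in Y$ and define the down-shift $\mathcal{F}'$ obtained by replacing each $F \in \mathcal{F}$ with $F \setminus \{a\}$ whenever $F \setminus \{a\} \notin \mathcal{F}$, and leaving $F$ unchanged otherwise; this operation preserves $|\mathcal{F}|$. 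The main obstacle, and the technical heart of the argument, is verifying that shifting cannot increase the number of shattered sets: one checks that if $\mathcal{F}'$ shatters a set $Z$, then $\mathcal{F}$ also shatters $Z$, which requires a careful case analysis on whether $a \in Z$. Comparing the traces $\mathcal{F}{\upharpoonright}_{Y \setminus \{a\}}$ with the two ``layers'' of $\mathcal{F}$ (those sets containing $a$ and those not) and applying the induction hypothesis to the restricted families on $Y \setminus \{a\}$ then closes the induction.

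For the second inequality, the computation is standard and I would only sketch it: assuming $m \geq d$ (so that $d/m \leq 1$), I multiply the target sum by $(d/m)^d \leq (d/m)^i$ for each $i \leq d$ and bound
\[
\left(\frac{d}{m}\right)^d \sum_{i=0}^d \binom{m}{i} \leq \sum_{i=0}^d \binom{m}{i}\left(\frac{d}{m}\right)^i \leq \sum_{i=0}^m \binom{m}{i}\left(\frac{d}{m}\right)^i = \left(1 + \frac{d}{m}\right)^m \leq e^d,
\]
using the binomial theorem and the inequality $1 + x \leq e^x$ with $x = d/m$. Rearranging gives $\sum_{i=0}^d \binom{m}{i} \leq (em/d)^d$, as required. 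The only genuinely delicate point in the whole proof is the shifting step's preservation of shattered sets; the rest is bookkeeping and elementary estimation.
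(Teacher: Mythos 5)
The paper does not actually prove this lemma: it is stated as the classical Sauer--Shelah lemma and used as a black box (the surrounding text treats it as a known result from the PAC-learning literature), so there is no in-paper argument to compare yours against. Your proposal is a correct, self-contained proof along entirely standard lines: the reduction to a fixed $m$-element trace set, the strengthened claim that a finite family has at least as many shattered subsets as members (Pajor's lemma), and the $(1+d/m)^m \leq e^d$ estimate for the polynomial bound are all sound. One small structural remark: you conflate the two standard routes to Pajor's lemma. The induction-on-$m$ proof needs no shifting at all --- decompose $\mathcal{F}$ via its trace on $Y\setminus\{a\}$ together with the ``doubled'' layer $\{F : F, F\cup\{a\} \in \mathcal{F}\}$, apply the induction hypothesis to both, and note that shattered sets of the second family yield shattered sets of $\mathcal{F}$ containing $a$ --- while the shifting proof needs no induction on $m$ but does need a termination argument (e.g.\ that $\sum_{F}|F|$ strictly decreases under each nontrivial shift) plus the observation that a down-closed family shatters each of its own members. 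Either route alone closes the argument; as written, your inductive step does both and should be disentangled. You should also note that the paper's displayed conclusion contains a typo ($\tau_{\HH}(m) \leq \tau_{\HH}(m) \leq (em/d)^d$ should read $\tau_{\HH}(m) \leq \sum_{i=0}^{d}\binom{m}{i} \leq (em/d)^d$) and that the polynomial estimate requires $d \geq 1$, since $(em/d)^d$ is not meaningful for $d = 0$; your hypothesis $m \geq d$ is the right one and is implied by the paper's $m > d+1$.
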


\begin{propo}\label{prop:short-wait}
Let $X$ be countable and $\HH\subset 2^X$ a VC finite hypothesis. Then for every $\epsilon > 0$ there is an $m > 0$ such that for all enumerations $f:X\to\omega$ 
\[S(\HH,f,m) \geq 1 - \epsilon.\]
Moreover, for all enumerations $f$
\[\lim\limits_{m\to\infty} S(\HH,f,m) = 1. \qedhere\]
\end{propo}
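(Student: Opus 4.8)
The plan is to bound the surprise from below by a quantity depending only on $m$ and the VC dimension $d$ of $\HH$, and then to observe that this quantity tends to $1$. Since the bound will not mention $f$ at all, uniformity over enumerations comes for free.

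First I would unwind the definition. Because $f$ is a sample it is injective, so $|f([m])| = m$ and the denominator in $S(\HH,f,m)$ equals $2^m$, giving
\[S(\HH,f,m) = 1 - \frac{|\HH{\upharpoonright}_{f([m])}|}{2^m}.\]
The set $f([m])$ is a subset of $X$ of cardinality exactly $m$, so by the definition of the growth function $\tau_\HH$ we have $|\HH{\upharpoonright}_{f([m])}| \leq \tau_\HH(m)$. This is the crucial step where uniformity enters: $\tau_\HH(m)$ is the maximum over \emph{all} $m$-element subsets and therefore dominates $|\HH{\upharpoonright}_{f([m])}|$ no matter which enumeration $f$ produced the set $f([m])$.

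Next I would invoke Lemma~\ref{lem:sauer-shelah}. Writing $d = \delta_{VC}(\HH) < \infty$, Sauer--Shelah gives $\tau_\HH(m) \leq \sum_{i=0}^d \binom{m}{i}$, and hence
\[S(\HH,f,m) \geq 1 - \frac{1}{2^m}\sum_{i=0}^d \binom{m}{i}\]
for every enumeration $f$ and every $m$. The subtracted term is a ratio whose numerator is a polynomial in $m$ of degree $d$ and whose denominator is $2^m$; equivalently, using the second form of the lemma, for $m > d+1$ the ratio is at most $(em/d)^d/2^m$. In either form it tends to $0$ as $m \to \infty$, since an exponential dominates any fixed-degree polynomial.

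The two conclusions now follow quickly. For the first, given $\epsilon > 0$ choose $m$ large enough that $2^{-m}\sum_{i=0}^d \binom{m}{i} < \epsilon$; then $S(\HH,f,m) \geq 1-\epsilon$ simultaneously for all $f$, precisely because the bound is independent of $f$. For the ``moreover,'' fix any $f$: we always have $S(\HH,f,m) \leq 1$ since $|\HH{\upharpoonright}_{f([m])}| \geq 0$, while the displayed lower bound tends to $1$, so the squeeze theorem yields $\lim_{m\to\infty} S(\HH,f,m) = 1$. I do not expect a serious obstacle here; the only point requiring care is recognizing that the enumeration-independence is handed to us for free by the growth function, exactly because $\tau_\HH(m)$ is defined as a maximum over all subsets of size $m$.
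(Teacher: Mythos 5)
Your proposal is correct and follows essentially the same route as the paper: bound $|\HH{\upharpoonright}_{f([m])}|$ by the growth function $\tau_{\HH}(m)$, invoke the Sauer--Shelah lemma to make that bound polynomial in $m$, and conclude since the polynomial-over-exponential ratio vanishes, with uniformity in $f$ coming from $\tau_{\HH}(m)$ being a maximum over all $m$-element subsets. If anything, your writeup is the cleaner of the two, since the paper's displayed inequality conflates $S(\HH,f,m)$ with the cardinality bound on $\HH{\upharpoonright}_{f([m])}$, which you correctly keep separate.
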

\begin{proof}
By the Sauer-Shelah lemma (\autoref{lem:sauer-shelah}), we have for $m > d+1$ the inequality
\[ S(\HH,f,m) \leq  \tau_{\HH}(m) \leq \left(\frac{em}{d}\right)^d \]
so that for all enumerations $f:X\to \omega$,
\[ S(\HH,f,m) \geq 1 - \frac{(em)^d}{2^m d^d}.\]
As $m\to\infty$, $\frac{(em)^d}{2^m e^d} \to 0$ and so
\[ \lim\limits_{m\to\infty} S(\HH,f,m) = 1.\qedhere\]
\end{proof}

On the other hand, in the case of a VC infinite class there exist samples on $X$ with surprise $0$ for unbounded time:

\begin{thm}\label{prop:long-wait}
Let $X$ be countable and $\HH \subset 2^X$ a VC infinite hypothesis. Then for every $m$ there exists a sample $f_m:\omega\to X$ such that for all $k<m$
\[ S(\HH,f_m,k) = 0 \qedhere\]
\end{thm}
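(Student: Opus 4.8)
The plan is to reduce the entire statement to a single observation: that $S(\HH,f,k) = 0$ is equivalent to $\HH$ shattering the finite set $f([k])$, combined with the fact that shattering is downward closed. First I would unpack the definition of surprise. Since $f$ is injective, $|f([k])| = k$, so $S(\HH,f,k) = 0$ holds exactly when $|\HH{\upharpoonright}_{f([k])}| = 2^{k}$, i.e. when $\HH{\upharpoonright}_{f([k])} = 2^{f([k])}$. By the definition of shattering, this is precisely the statement that $\HH$ shatters $f([k])$. Thus the required conclusion ``$S(\HH,f_m,k) = 0$ for all $k < m$'' is the same as demanding that $\HH$ shatter each of the sets $f_m([0]), f_m([1]), \ldots, f_m([m-1])$.

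Next I would record the elementary heredity of shattering: if $\HH$ shatters $Y$ and $Y' \subseteq Y$, then $\HH$ shatters $Y'$, since any $g\colon Y' \to 2$ extends to some $h\colon Y\to 2$, which is realized by a member of $\HH$ whose restriction to $Y'$ is $g$. Because the sets $f_m([0]) \subseteq f_m([1]) \subseteq \cdots \subseteq f_m([m-1])$ form an increasing chain, it therefore suffices to arrange that the single largest one, $f_m([m-1])$, of size $m-1$, is shattered; all smaller initial segments are then shattered automatically. This is the step that carries the real content, collapsing the family of conditions into one.

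It then remains to produce such a sample. Since $\HH$ is VC infinite it shatters sets of arbitrarily large finite size, so there is a shattered set $Y$ with $|Y| \geq m-1$, and by heredity we may take $|Y| = m-1$. Note that VC infiniteness forces $X$ to be infinite, so that samples, i.e. injective maps $\omega \to X$, exist at all. I would then define $f_m$ by listing the $m-1$ elements of $Y$ as $f_m(0), \ldots, f_m(m-2)$ and extending injectively into the still countably infinite complement $X \setminus Y$ to cover the remaining indices. Then $f_m([m-1]) = Y$ is shattered, and the previous paragraph finishes the argument.

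I do not expect a genuine obstacle here. The only point requiring a moment's care is the bookkeeping that a finite shattered set of the correct size can be realized as an initial segment of an injective sample, which is immediate once one observes that $X$ is countably infinite and its complement of a finite set is again countably infinite.
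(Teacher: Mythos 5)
Your proposal is correct and follows essentially the same route as the paper: exhibit a finite shattered set of the appropriate size (the paper uses size $m$, you use $m-1$, which suffices), enumerate it first in the sample, and observe that the surprise of every initial segment is then $1 - 2^k/2^k = 0$. You are merely more explicit about the downward closure of shattering and the bookkeeping of extending the enumeration, which the paper leaves implicit.
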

\begin{proof}
Since $\HH$ is VC infinite there exists a set $X_m\subset X$ of size $m$ which is shattered. Let $f_m$ be any enumeration of $X$ enumerating $X_m$ first. Then for all $k<m$ 
\[S(\HH,f,k) = 1 - \frac{|\HH{\upharpoonright}_{f([k])}|}{2^k} = 1 - \frac{2^k}{2^k} = 0. \qedhere \]
\end{proof}

This result has the following epistemic interpretation: for an agent undertaking observational inquiry, VC infinite classes may take unboundedly long to yield nontrivial surprise. 
    
\subsection{Falsifiability and Control Studies}

In the previous section we saw how an agent in an impoverished epistemic state---only being able to conduct purely observational studies without any way to alter the data stream---is guaranteed short-run falsifiability of a hypothesis $\HH$ just in case the hypothesis is VC finite. In this section we characterize the falsifiability of a hypothesis $\HH$ in terms of the existence of certain \textit{selectors}---to be thought of as an agent's sequential choice of objects amongst those in $X$---witnessing crucial experiments. 

For example, consider a simplified account of a particle collision experiment wherein at each time $t$ the scientist observes the collision of two elementary particles and the output is recorded. If the hypothesis $H$ in question is a hypothesis concerning the result of a collision between bosons, then the scientist may have to wait an unboundedly long time witnessing irrelevant experiments (e.g. proton-proton collisions). To make the hypothesis \textit{efficiently} falsifiable requires some form of control over the sampling procedure. To this end we define the notion of a selector.

\begin{defn}
A selector $s:\omega \to X$ is an injective sample. \end{defn}

The always falsifiability of a hypothesis is a necessary and sufficient condition for the existence of efficiently falsifying the hypothesis with a selector.

\begin{propo}\label{prop:falsifiable-selector}
If $\HH$ is always falsifiable provided then there exists a selector $s:\omega \to X$ such that for each $m$, a crucial experiment will be performed by sampling $s([0,m+k])$ where $k = \delta_P(\HH,s[m])$.

Moreover, if for every string $x \in 2^{X_0}$ for $X_0 \subset X$ finite there is a selector $s$ such that $s([m]) = x$ and from $m+1$ onward satisfies that a crucial experiment will be performed by sampling $s([0,m+k])$ where $k = \delta_P(\HH,s[m])$. 
\end{propo}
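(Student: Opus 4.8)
The plan is to build the selector greedily in blocks, using at each stage the finiteness of the Popper dimension---guaranteed by always falsifiability---to pin down a short block of fresh observations that forces the next crucial experiment. First I would record the two facts that drive the argument. By the earlier proposition, always falsifiability is equivalent to hereditary Popper finiteness, so $\delta_P(\HH,g)<\infty$ for every finite partial $g\colon X_0\to\{0,1\}$. And, unwinding the definition of surprise, the operative dynamic notion of a crucial experiment \emph{relative to the data $g$ already seen} is a finite $Y\subseteq X\setminus\dom(g)$ that is not shattered by $\HH_g$, i.e.\ with $\HH_g{\upharpoonright}_Y\neq 2^Y$; the least size of such a $Y$ is by definition $\delta_P(\HH,g)$, which is exactly the quantity $k$ appearing in the statement.

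The one real step is a shattering lemma bridging this conditional notion with the unconditional surprise: if $g\colon X_0\to\{0,1\}$ is finite and $Y\subseteq X\setminus X_0$ is not shattered by $\HH_g$, then $X_0\cup Y$ is not shattered by $\HH$, whence $S(\HH,s,|X_0\cup Y|)>0$. Indeed, were $X_0\cup Y$ shattered, then for each $\sigma\in 2^Y$ the pattern $g\cup\sigma$ would be realized by some $h\in\HH$; such an $h$ lies in $\HH_g$ and satisfies $h{\upharpoonright}_Y=\sigma$, so $\HH_g{\upharpoonright}_Y=2^Y$, contradicting non-shattering. I expect this implication to be the crux, short though it is, since it is what makes a data-relative crucial experiment register as a genuine surprise along the selector.

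With the lemma in hand the construction is routine. Set $m_0=0$ and $g_0=\varnothing$. Given $s$ defined on $[0,m_i)$ with observed data $g_i$, put $k_i=\delta_P(\HH,g_i)<\infty$, choose a witnessing $Y_i\subseteq X\setminus\dom(g_i)$ of size $k_i$ that $\HH_{g_i}$ fails to shatter, and let $s$ enumerate $Y_i$ across positions $[m_i,m_i+k_i)$; set $m_{i+1}=m_i+k_i$. The points of $Y_i$ are fresh, so $s$ stays injective, and since $k_i\geq 1$ unless $\HH_{g_i}=\varnothing$ (the degenerate case in which $g_i$ has already refuted $\HH$, where $s$ is completed arbitrarily and injectively), $s$ is total on $\omega$. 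By the lemma, $s([0,m_{i+1}])$ is not shattered, so a crucial experiment is performed at stage $m_{i+1}=m_i+k_i$ with $k_i=\delta_P(\HH,s[m_i])$, which is precisely the asserted bound at each checkpoint $m=m_i$.

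For the \emph{moreover} clause I would simply seed the recursion: given $x\colon X_0\to\{0,1\}$, let $s$ enumerate $X_0$ over its first $m=|X_0|$ positions so that $s[m]=x$ along any world extending $x$, then run the block construction from $g_0=x$; with $k=\delta_P(\HH,x)$, sampling $s([0,m+k])$ forces the crucial experiment. For the converse direction implicit in the claim, note that if $\HH$ fails to be always falsifiable then some finite $g$ has $\delta_P(\HH,g)=\infty$, so every finite $Y$ is shattered by $\HH_g$ and no selector extending $g$ can produce a crucial experiment in finitely many further steps; thus efficient falsification by a selector is equivalent to always falsifiability.
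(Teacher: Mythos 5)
Your construction is the same greedy block construction the paper uses: at each stage take the finite Popper dimension $k=\delta_P(\HH,s[m])$ guaranteed by hereditary Popper finiteness, append a witnessing unshattered set $Y$ of size $k$, and iterate; the seeding of the recursion with an arbitrary finite string $x$ for the ``moreover'' clause and the immediate converse also match. Where you go beyond the paper is in actually verifying that the construction delivers what the statement promises: your bridging lemma --- that if $Y\subseteq X\setminus\dom(g)$ is not shattered by $\HH_g$ then $\dom(g)\cup Y$ is not shattered by $\HH$, so the sampled initial segment registers a genuine crucial experiment --- is exactly the step the paper's proof leaves implicit when it says ``by construction,'' and it is correct as you argue it (a full shattering of $\dom(g)\cup Y$ would realize every pattern $g\cup\sigma$ and hence force $\HH_g{\upharpoonright}_Y=2^Y$). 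You also handle the degenerate case $\delta_P(\HH,g)=0$ (i.e.\ $\HH_g=\varnothing$), where the paper's blocks would stall and injectivity/totality of $s$ needs a separate arrangement. So: same route, but your write-up closes two small gaps in the paper's own argument rather than introducing any new ones.
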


\begin{proof}
We construct $s: \omega \to X$ in stages:
\begin{itemize}
    \item (Stage $n = 0$) Suppose $\delta_P(\HH,\varnothing) = k < \infty$
    Let $s(0),\dots,s(k-1)$ enumerate any set $Y\subset X$ witnessing $\delta_P(\HH,\varnothing) = k$.
    \item (Stage $n = m+1$) Suppose $\delta_P(\HH,s([m])) = k < \infty$ and suppose that $s$ is defined on range $[\ell]$. Then define
    $s(\ell+1),\dots,s(\ell+k)$ so as to enumerate any set $Y\subset X$ witnessing $\delta_P(\HH,s[m]) = k$.
\end{itemize}
By construction, $s$ is injective and defined on all of $\omega$, so $s$ is a selector.

The converse is immediate from the definition of hereditary Popper-finiteness.
\end{proof}

We interpret this result as saying that a hypothesis class is always falsifiable just in case an agent able to select data along a selector $s$ as in \autoref{prop:falsifiable-selector} can falsify $\HH$ with sample bounds given by the Popper dimensions $\delta_P(\HH,s([n]))$.

\subsection{VC Finiteness and Inferring Always Falsifiability on Subsamples}
In the setup considered above, we looked only at the always falsifiability of a hypothesis over a fixed, countable sample set $X$. However, in typical scientific inference we typically wish to probe a hypothesis $\mathcal{H}$ with the aid of some (possibly incomplete) sample of the world. In fact, we study hypotheses knowing full well that our sampling capabilities are bounded: we cannot directly perform tests in the ancient past or beyond the observable universe. 

This would be no issue if the inference 

\[\begin{nd}
\hypo {1}{\mathcal{H} \text{ is always falsifiable. }}
\have {2}{\mathcal{H}{\upharpoonright}_Y \text{ is always falsifiable.}} 
\end{nd}
\]

were true for all subsamples $Y\subset X$. However, this inference is invalid. Recall from topology that the interior of a set $X$, $\operatorname{int}(X)$, is the union of all open subsets $U\subseteq X$, and the closure of the set $X$, $\overline{X}$, is the intersection of all closed subsets $C\supseteq X$. Nowhere density of a set $X$ is typically defined by
\[ \operatorname{int}(\overline{\mathcal{H}}) = \varnothing.\]
In the case of \textit{finite} subsamples, recall that

\begin{propo}\label{prop:finite_nowhere_dense}
The only nowhere dense subset of $2^n$ is $\varnothing$.
\end{propo}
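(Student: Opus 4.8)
The plan is to exploit the fact that for finite $n$ the space $2^n$ is a finite discrete space, which makes both interior and closure act trivially. First I would observe that since $n$ is finite, $2^n$ is a product of finitely many copies of the two-point discrete space $\{0,1\}$, and hence carries the discrete topology: each singleton $\{x\}$ for $x \in 2^n$ is exactly the basic open cylinder $U_x$ determined by the full-length string $x$, so every singleton is open and therefore every subset of $2^n$ is open. Consequently every subset of $2^n$ is also closed, i.e., clopen.

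Next I would compute the interior of the closure directly. Fix an arbitrary $\HH \subseteq 2^n$. Because $\HH$ is closed we have $\overline{\HH} = \HH$, and because $\HH$ is open we have $\operatorname{int}(\HH) = \HH$. Combining these gives $\operatorname{int}(\overline{\HH}) = \operatorname{int}(\HH) = \HH$. Thus the operation $\HH \mapsto \operatorname{int}(\overline{\HH})$ is the identity on subsets of $2^n$.

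The result then follows immediately: $\HH$ is nowhere dense, meaning $\operatorname{int}(\overline{\HH}) = \varnothing$, if and only if $\HH = \varnothing$. This also clarifies the contrast drawn in the surrounding text with the infinite case $2^\omega$, where nontrivial nowhere dense sets abound precisely because the topology there is no longer discrete.

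There is no genuine obstacle here; the only point requiring care (rather than difficulty) is recognizing that the relevant product topology on the \emph{finite} set $2^n$ is the discrete topology. Once that observation is made, the argument is forced, since on a space all of whose subsets are clopen the interior and closure operators are both the identity, so the nowhere-density condition collapses to emptiness.
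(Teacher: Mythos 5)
Your proof is correct and follows essentially the same route as the paper: both identify the product topology on the finite set $2^n$ as the discrete topology, conclude that interior and closure are the identity on subsets, and deduce that $\operatorname{int}(\overline{\HH}) = \varnothing$ forces $\HH = \varnothing$. You simply spell out the intermediate steps (singletons as full-length cylinders, every subset clopen) more explicitly than the paper does.
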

\begin{proof}
The product topology on $2^n$ is the discrete topology, so the interior and closure operators on subsets $\mathcal{H}\subset 2^n$ are equal to the identity operator. Thus, $\operatorname{int}(\overline{\mathcal{H}}) = \varnothing$ just in case $\mathcal{H} = \varnothing$. 
\end{proof}

From this observation it follows that
\begin{propo}
Suppose that $\mathcal{H} \subset 2^{\omega}$ is nonempty. Then for all $Y\subset X$ finite nonempty, $\mathcal{H}{\upharpoonright}_Y$ is not nowhere dense.
\end{propo}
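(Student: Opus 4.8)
The plan is to reduce the statement immediately to \autoref{prop:finite_nowhere_dense}, which records that the only nowhere dense subset of a finite discrete power $2^n$ is $\varnothing$. Writing $n = |Y|$, any bijection $Y \cong \{0,\dots,n-1\}$ induces a homeomorphism $2^Y \cong 2^n$, so $2^Y$ is itself finite and discrete, and the contrapositive of \autoref{prop:finite_nowhere_dense} applies: \emph{every} nonempty subset of $2^Y$ fails to be nowhere dense. Thus the whole proposition will follow once I verify that $\mathcal{H}{\upharpoonright}_Y$ is nonempty.

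First I would note that $\mathcal{H}{\upharpoonright}_Y$ is by definition a subset of $2^Y$, so the target space is exactly the finite discrete space to which the previous proposition speaks. The only point left to check is nonemptiness. Since $\mathcal{H}$ is nonempty by hypothesis, I fix any $h \in \mathcal{H}$; then the pointwise restriction $h{\upharpoonright}_Y$ is by construction an element of $\mathcal{H}{\upharpoonright}_Y$, whence $\mathcal{H}{\upharpoonright}_Y \neq \varnothing$.

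Combining these two observations, $\mathcal{H}{\upharpoonright}_Y$ is a nonempty subset of the finite discrete space $2^Y$, so by \autoref{prop:finite_nowhere_dense} it is not nowhere dense, as claimed.

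There is essentially no obstacle here: the entire mathematical content is supplied by the finite-space computation already recorded in \autoref{prop:finite_nowhere_dense}, and the only ingredient specific to this statement is the near-tautological remark that restricting a nonempty family of functions cannot yield the empty family. I would state the result separately nonetheless, since it sharply exhibits why the naive inference ``$\mathcal{H}$ always falsifiable on $X$ implies $\mathcal{H}{\upharpoonright}_Y$ always falsifiable on every finite subsample $Y$'' must fail: on a finite subsample the only nowhere dense (equivalently, always falsifiable) hypothesis is the empty one, so a nonempty always-falsifiable hypothesis necessarily loses that property upon restriction to any finite $Y$.
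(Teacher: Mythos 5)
Your proposal is correct and follows exactly the paper's own argument: restrict a witness $h\in\mathcal{H}$ to see that $\mathcal{H}{\upharpoonright}_Y$ is nonempty, then invoke \autoref{prop:finite_nowhere_dense} to conclude it cannot be nowhere dense in the finite discrete space $2^Y$. The extra detail you supply (the bijection $Y\cong\{0,\dots,n-1\}$ and the explicit nonemptiness check) merely spells out steps the paper leaves implicit.
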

\begin{proof}
Suppose that $\mathcal{H}$ is nonempty. Then $\mathcal{H}{\upharpoonright}_Y$ is nonempty, so cannot be nowhere dense by \autoref{prop:finite_nowhere_dense}.
\end{proof}

This result should not surprise us; after all, if there are only finitely many observations to be made then there will not always exist a further crucial test to perform as required by always falsifiability. 

Less trivially, we can exhibit the existence of nowhere dense hypotheses $\mathcal{H}$ such that the inference 
\[\begin{nd}
\hypo {1}{\mathcal{H} \text{ is always falsifiable. }}
\have {2}{\mathcal{H}{\upharpoonright}_{Y} \text{ is always falsifiable.}} 
\end{nd}
\]
fails on a continuum-sized ideal of samples $Y \subset 2^{\omega}$.

\begin{propo}
Let $X\subset \omega$ be infinite-coinfinite. Then there exists a nowhere dense $\mathcal{H}_X\subset 2^{\omega}$ such that for all $Y\subseteq X$, $\mathcal{H}_X{\upharpoonright}_Y = 2^Y$ and therefore not nowhere dense.
\end{propo}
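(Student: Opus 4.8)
The plan is to generalize the counterexample of \autoref{prop:vc-popper}(2), where the even coordinates were forced to $0$ while the odd coordinates were left entirely free. Here the roles of ``even'' and ``odd'' are played by $\omega\setminus X$ and $X$ respectively. Concretely, I would define
\[ \mathcal{H}_X = \{ f\in 2^\omega \mid f(n) = 0 \text{ for all } n\in \omega\setminus X \}. \]
Since the only constraints imposed on a function $f\in\mathcal{H}_X$ concern coordinates lying \emph{outside} $X$, one expects this set to be completely unconstrained---hence shattering---on $X$, while the infinitely many forced-zero coordinates in $\omega\setminus X$ should drive nowhere density.

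First I would verify the shattering claim. Fix $Y\subseteq X$ and an arbitrary $g\in 2^Y$; I would produce a witness $f\in\mathcal{H}_X$ with $f{\upharpoonright}_Y = g$ by extending $g$ by zero, i.e.\ setting $f(n) = g(n)$ for $n\in Y$ and $f(n) = 0$ otherwise. Because $Y\subseteq X$ is disjoint from $\omega\setminus X$, the extension satisfies $f(n) = 0$ for every $n\in\omega\setminus X$, so $f\in\mathcal{H}_X$ and $f{\upharpoonright}_Y = g$. As $g$ was arbitrary this gives $\mathcal{H}_X{\upharpoonright}_Y = 2^Y$. This restriction is nonempty and hence not nowhere dense by \autoref{prop:finite_nowhere_dense} when $Y$ is finite, and it is the whole space $2^Y$ when $Y$ is infinite, so again not nowhere dense.

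Next I would establish nowhere density of $\mathcal{H}_X$ in $2^\omega$ via the basic-open criterion used in the proof that always falsifiability coincides with hereditary Popper finiteness: it suffices to show that every nonempty basic open $U_s$ contains a nonempty basic open $V$ with $\mathcal{H}_X\cap V = \varnothing$. Given a string $s$ of length $\ell$, I would use the \emph{coinfiniteness} of $X$---that is, the infinitude of $\omega\setminus X$---to pick some $m\ge \ell$ with $m\in\omega\setminus X$. Extending $s$ to a string $t$ of length $m+1$ that agrees with $s$ on its domain and sets $t(m) = 1$, every $f\in U_t$ has $f(m) = 1$; but membership in $\mathcal{H}_X$ forces $f(m) = 0$, so $\mathcal{H}_X\cap U_t = \varnothing$. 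Hence $\mathcal{H}_X$ is nowhere dense, and therefore always falsifiable.

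There is no serious obstacle here; the only point requiring care is the correct deployment of the two hypotheses on $X$. Coinfiniteness is exactly what guarantees an infinite supply of forced-zero coordinates beyond any finite initial segment, which is what yields nowhere density; infiniteness of $X$ is what makes the family of ``bad'' subsamples $\{Y \mid Y\subseteq X\}$ have size continuum, substantiating the claim in the surrounding text that the inference from always falsifiability of $\mathcal{H}$ to that of $\mathcal{H}{\upharpoonright}_Y$ fails on a continuum-sized ideal of samples.
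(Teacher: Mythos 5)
Your construction is identical to the paper's: $\mathcal{H}_X$ is the set of $f\in 2^\omega$ vanishing off $X$, with coinfiniteness of $X$ supplying the crucial experiments for nowhere density and freedom on $X$ giving $\mathcal{H}_X{\upharpoonright}_Y = 2^Y$ for $Y\subseteq X$. The paper states these two facts tersely, while you spell out the basic-open argument and the extension-by-zero witness; the approach is the same and correct.
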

\begin{proof}
Define $\mathcal{H}_X$ as the set of all functions $f:\omega \to \{0,1\}$ such that $f(x) = 0$ if $x\notin X$. 

Because $X$ is coinfinite, the set $\mathcal{H}_X$ is nowhere dense: every $x\notin X$ yields a crucial experiment. Moreover, $\mathcal{H}_X{\upharpoonright}_X = 2^X$ by definition, and likewise for any $Y\subseteq X$ we have that $\mathcal{H}_X{\upharpoonright}_Y = 2^Y$.
\end{proof}

While nowhere dense over the full sample set $\omega$, the hypotheses $\mathcal{H}_X$ fail to be always falsifiable on any $Y\subseteq X$.

On the other hand, the stronger notion of VC finiteness is preserved under the implication above:

\begin{propo}
For all $Y\subseteq X$ the inference rule
\[\begin{nd}
\hypo {1}{\mathcal{H} \text{ is VC finite.}}
\have {2}{\mathcal{H}{\upharpoonright}_Y \text{ is VC finite.}} 
\end{nd}
\]
is valid.
\end{propo}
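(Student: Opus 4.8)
The plan is to show that VC dimension is monotone under restriction of the ambient sample space, from which the VC finiteness of $\HH{\upharpoonright}_Y$ follows at once. The crux is that shattering is a purely local phenomenon: whether a set is shattered depends only on the traces of the hypothesis class on that very set, and these traces are unaffected by an intermediate restriction to a larger set.

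First I would record the elementary restriction identity, that for any $Z \subseteq Y \subseteq X$ and any $\HH \subseteq 2^X$,
\[(\HH{\upharpoonright}_Y){\upharpoonright}_Z = \HH{\upharpoonright}_Z,\]
which holds because restricting a function first to $Y$ and then to the subset $Z$ agrees with restricting it directly to $Z$. Next I would use this to transfer shattering in one direction. Every set $Z$ shattered by $\HH{\upharpoonright}_Y$ is necessarily contained in $Y$, and for such $Z$ the identity above applies, so $(\HH{\upharpoonright}_Y){\upharpoonright}_Z = 2^Z$ holds exactly when $\HH{\upharpoonright}_Z = 2^Z$. In other words, $Z$ is shattered by $\HH{\upharpoonright}_Y$ precisely when it is shattered by $\HH$ itself. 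In particular every set shattered by the restriction is already shattered by $\HH$, whence
\[\delta_{VC}(\HH{\upharpoonright}_Y) \leq \delta_{VC}(\HH).\]

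Finally, since $\HH$ is assumed VC finite, $\delta_{VC}(\HH)$ is finite, and the displayed inequality forces $\delta_{VC}(\HH{\upharpoonright}_Y)$ to be finite as well; thus $\HH{\upharpoonright}_Y$ is VC finite, establishing the inference rule.

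There is no substantial obstacle here — the statement is essentially the monotonicity of VC dimension under passage to a subdomain — and the only point that warrants care is confirming that the witnesses to shattering of $\HH{\upharpoonright}_Y$ may be taken inside $Y$, so that the restriction identity is actually applicable. This stands in deliberate contrast to the earlier failure of the analogous inference for always falsifiability: nowhere density is a global, topological condition that can be destroyed by restriction, whereas VC finiteness is controlled by finite shattered configurations, which can only become scarcer as the sample space shrinks.
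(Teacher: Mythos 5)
Your proof is correct and follows essentially the same route as the paper's: both argue that any set shattered by $\HH{\upharpoonright}_Y$ is already shattered by $\HH$, giving $\delta_{VC}(\HH{\upharpoonright}_Y) \leq \delta_{VC}(\HH)$ and hence VC finiteness of the restriction. You simply make explicit the restriction identity and the point that shattered witnesses lie inside $Y$, which the paper leaves implicit.
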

\begin{proof}
Observe that if $Y_0 \subset Y$ is shattered by $\HH{\upharpoonright}_Y$ then $Y_0$ is itself shattered by $\HH$. Thus
\[VC(\HH{\upharpoonright}_Y) \leq VC(\HH) \]
so $\HH\cap 2^Y$ is VC finite.
\end{proof}

VC finiteness does not, however, characterize those hypothesis classes which are hereditarily nowhere dense.

\begin{propo}
Partition $\omega = \bigcup\limits_{n\in\omega} X_n$ where each $|X_n| = n$. Let $\HH = \bigcup\limits_{n\in\omega} 2^{X_n}$. Then
\begin{enumerate}
    \item for every infinite $Y\subset \omega$, $\HH{\upharpoonright}_Y$ is nowhere dense in $2^Y$, and
    \item $\HH$ is VC infinite. \qedhere
\end{enumerate}
\end{propo}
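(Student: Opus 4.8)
The plan is to handle the two clauses separately, establishing (2) first since it is immediate and then devoting the bulk of the argument to (1).

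For (2), I would observe that $\HH$ shatters every block $X_n$. Indeed, given any $g \in 2^{X_n}$, extend it to the function $\hat g \in 2^{\omega}$ that agrees with $g$ on $X_n$ and vanishes identically off $X_n$; then $\hat g$ is supported on $X_n$, so $\hat g \in 2^{X_n} \subseteq \HH$ and $\hat g{\upharpoonright}_{X_n} = g$. Hence $\HH{\upharpoonright}_{X_n} = 2^{X_n}$, so $X_n$ is shattered. Since $|X_n| = n$ is unbounded, $\HH$ shatters arbitrarily large finite sets and $\delta_{VC}(\HH) = \infty$.

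For (1), the key structural fact is that each $g \in \HH{\upharpoonright}_Y$ has support contained in a single block: if $g = h{\upharpoonright}_Y$ with $h \in 2^{X_n}$, then $\operatorname{supp}(g) \subseteq X_n \cap Y$. I would then note that because the $X_n$ partition $\omega$ into finite pieces, any infinite $Y$ must meet infinitely many blocks (a set meeting only finitely many blocks is a finite union of finite sets, hence finite). With these two observations in hand, I would prove nowhere density directly via the cylinder/crucial-experiment argument used in the proof that always falsifiability is equivalent to hereditary Popper finiteness. Given an arbitrary nonempty basic open $U_s \subseteq 2^Y$ determined by a finite partial function $s$ with domain $Y_0 \subseteq Y$, the set $Y_0$ meets only finitely many blocks, so since $Y \setminus Y_0$ still meets infinitely many blocks I can choose two points $y_1, y_2 \in Y \setminus Y_0$ lying in two distinct blocks. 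Extending $s$ to the finite partial function $t$ with $t(y_1) = t(y_2) = 1$, any $g \in \HH{\upharpoonright}_Y$ extending $t$ would be nonzero at points of two different blocks, contradicting the support fact. Thus the nonempty basic open set $U_t \subseteq U_s$ is disjoint from $\HH{\upharpoonright}_Y$, witnessing that $\HH{\upharpoonright}_Y$ is nowhere dense.

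The step I expect to be the genuine obstacle is not producing the falsifying cylinder but recognizing that it is needed: $\HH{\upharpoonright}_Y$ is a countable union of the finite (hence nowhere dense) sets $\{g \in 2^Y \,\mid\, \operatorname{supp}(g) \subseteq X_n \cap Y\}$, so it is meager, but meagerness does not imply nowhere density, and one cannot conclude nowhere density merely from the nowhere density of the pieces. The crucial quantitative point that rescues the argument is that a single forced coordinate does not suffice to falsify membership—the function supported on that one block and matching the forced value still lies in $\HH{\upharpoonright}_Y$—so one must force two ones in distinct blocks, which is exactly where the hypothesis that $Y$ meets infinitely many blocks is used.
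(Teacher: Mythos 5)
Your proof is correct and follows essentially the same route as the paper's, which simply asserts that $\HH$ shatters arbitrarily large sets (giving VC infiniteness) and shatters only finite sets (giving hereditary Popper finiteness, hence nowhere density). Your write-up is in fact more careful than the paper's two-line argument: it makes explicit both the support-in-a-single-block observation and the point that forcing two $1$'s in distinct blocks is what produces the falsifying cylinder inside an arbitrary basic open subset of $2^Y$.
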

\begin{proof}

By construction, $\HH$ shatters only finite sets, so is hereditarily Popper finite.

$\HH$ is VC infinite since, by construction, it shatters arbitrarily large sets.
\end{proof}

Despite this, there is a precise sense in which one can say that if $\HH\subset 2^{\omega}$ is a hypothesis of infinite VC dimension then the structure $\MM_{\HH}$ is observationally indistinguishable from a structure $\NN$ such that that an infinite set is shattered by the relation $R$.

\begin{propo}
Let $\HH\subset 2^{\omega}$ be VC infinite. Then there exists an $\NN$ elementarily equivalent to $\MM_{\HH}$ such that the interpretation of $\HH$ in $\NN$, $\HH^{*}$, shatters an infinite set.
\end{propo}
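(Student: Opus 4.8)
The plan is to realize an infinite shattered set in an elementarily equivalent structure by a compactness argument, using VC infiniteness to guarantee finite satisfiability, and then to upgrade finite shattering to full shattering by passing to a saturated model. First I would adjoin to $\LL$ a countable set of fresh constant symbols $c_0, c_1, c_2, \dots$, intended to name the observations of the shattered set, and form in this expanded language the theory
\[ T^* = \operatorname{Th}(\MM_{\HH}) \cup \{\,O(c_i) : i\in\omega\,\} \cup \{\, c_i \neq c_j : i\neq j\,\} \cup \Sigma, \]
where $\Sigma$ collects, for every finite $S\seq\omega$ and every $g\colon S\to 2$, the sentence asserting the existence of a world $y$ with $R(c_i,y)$ holding exactly when $g(i)=1$ for $i\in S$. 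Any model of $T^*$, reduced to $\LL$, is a structure $\NN \equiv \MM_{\HH}$ in which the set $A = \{c_i^{\NN}\}$ of observations is infinite (by the distinctness axioms) and has all of its finite subsets shattered by the worlds of $\NN$ (by $\Sigma$).

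The key step is verifying that $T^*$ is finitely satisfiable, and this is exactly where VC infiniteness enters. A finite fragment $\Delta \seq T^*$ mentions only finitely many constants, say $c_{i_1},\dots,c_{i_k}$, and imposes at most the requirement that all $2^k$ truth-assignments on these constants be witnessed by worlds. Since $\HH$ is VC infinite it shatters some $Y\seq\omega$ with $|Y|=k$; interpreting $c_{i_1},\dots,c_{i_k}$ as distinct elements of $Y$ in $\MM_{\HH}$ itself then satisfies every axiom of $\Delta$, because the defining property $\HH{\upharpoonright}_Y = 2^Y$ supplies a world realizing each of the finitely many required patterns, while $O(c_{i_j})$ holds as each element of $Y$ is an observation. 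Hence every finite fragment of $T^*$ has a model, and by compactness $T^*$ has a model, whose $\LL$-reduct is an $\NN \equiv \MM_{\HH}$ carrying an infinite, finitely-shattered set $A$.

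The main obstacle is that compactness alone yields only the shattering of the \emph{finite} subsets of $A$, whereas the paper's definition of shattering demands $\HH^{*}{\upharpoonright}_A = 2^A$, i.e. that every one of the continuum-many functions $g\colon A\to 2$ be realized by some world. To obtain this I would replace $\NN$ by an $\aleph_1$-saturated elementary extension, which exists because the theory is countable and which carries no cardinality obstruction since such a model may be taken of size $2^{\aleph_0}$, enough to house $2^{\aleph_0}$ distinct worlds. For each $g\colon A\to 2$ the set $p_g(y) = \{W(y)\} \cup \{R(a,y) : g(a)=1\} \cup \{\neg R(a,y): g(a)=0\}$ is a type over the countable parameter set $A$, and it is finitely satisfiable precisely because $A$ is finitely shattered. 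By $\aleph_1$-saturation each $p_g$ is realized, so every function $A\to 2$ is witnessed by a world, giving $\HH^{*}{\upharpoonright}_A = 2^A$. Thus $\NN \equiv \MM_{\HH}$ shatters the infinite set $A$, as required.
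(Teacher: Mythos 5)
Your proof is correct and follows essentially the same route as the paper's, which simply takes a sufficiently saturated (ultrapower) model $\NN\equiv\MM_{\HH}$ and invokes saturation to turn the first-order-expressible fact that arbitrarily large finite sets are shattered into an infinite shattered set. Your two-step version (compactness with fresh constants to pin down an infinite finitely-shattered set $A$, then $\aleph_1$-saturation to realize each type $p_g$ over the countable parameter set $A$) carefully fills in exactly the details the paper's one-line appeal to saturation leaves implicit.
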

\begin{proof}
Let $\NN$ be a sufficiently saturated nonprincipal ultrapower of $\MM_{\HH}$. Then 
\[ \NN = \omega^{*} \cup \HH^{*} \]
defines the structure of a hypothesis set on $2^{\omega*}$. $\HH^{*}$ is regarded as a subset of $2^{\omega^{*}}$ by way of the interpretation of the relation $R$. That is, we may regard
\[ \NN \models R(n^{*},h^{*}) \iff n^{*}\in h^{*}\]
as the definition of an embedding $\HH^{*} \subset 2^{\omega^{*}}$.

By saturation, $\NN$ shatters an infinite set as $\HH$ shatters arbitrarily large finite sets.
\end{proof} 

We note here that the $\NN$ as constructed in the above proposition is \textit{elementarily equivalent} to $\MM_{\HH}$. This suffices to conclude that, in a strong sense, no finitistic agent will ever be able to discern between $\MM_{\HH}$ and $\NN$. This is due to the equivalence between elementary equivalence and finitary back-and-forth equivalence.

\begin{defn}\autocite[Definition XI.1.1]{ebbinghaus2013ml} Let $\MM$ and $\NN$ be $\LL$-structures. A partial function $f:\MM \dashrightarrow \NN$ with domain $\dom(f)\subseteq \MM$ and range $\rng(f)\subseteq \NN$ is a \textit{partial isomorphism} provided 
\begin{enumerate}
    \item $f$ is injective,
    \item $f$ preserves all relations, function symbols, and constants in $\LL$. \qedhere
\end{enumerate}

\end{defn}

Finitary back-and-forth equivalence is a property about being able to extend arbitrary partial isomorphisms with finite domain:

\begin{defn}\autocite[Definition XI.1.3]{ebbinghaus2013ml}
Two $\LL$-structures $\MM$ and $\NN$ are finitarily back-and-forth equivalence provided there is a sequence $(I_n)_{n\in\omega}$ such that
\begin{itemize}
    \item Every $I_n$ is a nonempty set of partial isomorphisms from $\MM$ to $\NN$,
    \item (Forth) For every $f\in I_{n+1}$ and $a\in \MM$ there is a $g\in I_{n}$ with $g\supseteq f$ and $a\in \dom(g)$
    \item (Back) For every $f\in I_{n+1}$ and $b\in \NN$ there is a $g\in I_{n}$ with $g\supseteq f$ and $a\in \rng(g)$. \qedhere
\end{itemize}
\end{defn}

The definition of finitary back-and-forth equivalence has an immediate epistemic interpretation. Two structures being back-and-forth equivalent means that any finite quantifier-free relation in $\MM$ can be witnessed in $\NN$ and vice versa. Thus, no finite amount of observation of quantifier-free formulas can discern between $\MM$ and $\NN$.\footnote{The astute reading will note that the definition of back-and-forth equivalence requires partial isomorphism between \textit{finitely-generated substructures,} which in the case of a language with function symbols may be infinite. One may remedy this by nothing that any theory $T$ in a language $\LL$ containing constant and function symbols is bi{\"i}nterpretable with a theory $T'$ in a purely relational language, where the finitely generated structures in a relational language are precisely the finite structures.} Fraiss{\'e}'s theorem relates finitary back-and-forth equivalence with elementary equivalence:

\begin{thm}\autocite[Theorem XI.2.1]{ebbinghaus2013ml}\label{thm:back_and_forth}
Let $\LL$ be a finite language. Two $\LL$-structures $\MM$ and $\NN$ are finitely back-and-forth equivalent if and only if they are elementarily equivalent
\end{thm}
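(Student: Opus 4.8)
The plan is to prove both directions by relating the back-and-forth hierarchy $(I_n)$ to the stratification of first-order formulas by quantifier rank. Following the footnote, I would first reduce to the case where $\LL$ is a finite \emph{relational} language, so that partial isomorphisms are exactly maps preserving atomic formulas and finitely generated substructures are finite. The central device is, for each $n\in\omega$ and each finite tuple of variables $\bar{x}$, the relation $\equiv_n$ of agreement on all formulas $\varphi(\bar{x})$ of quantifier rank at most $n$; I will connect $\equiv_{n+1}$ to $\equiv_n$ via an Ehrenfeucht--Fra\"iss\'e-style one-step extension condition.

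The key lemma---and the step I expect to be the main obstacle---is the finiteness, up to logical equivalence, of the set of rank-$\leq n$ formulas in a fixed finite context $\bar{x}$. I would prove this by induction on $n$: at $n=0$ there are only finitely many atomic formulas in $\bar{x}$ because $\LL$ is finite and relational, hence only finitely many quantifier-free formulas up to equivalence; at the inductive step, every rank-$\leq n+1$ formula is a Boolean combination of rank-$\leq n$ formulas and formulas $\exists y\,\psi(\bar{x},y)$ with $\psi$ of rank $\leq n$, and there are only finitely many of the latter by the induction hypothesis applied to the context $\bar{x},y$. This is precisely where the finiteness of $\LL$ is indispensable: it lets me define, for each tuple $\bar{a}$ in $\MM$, a single first-order \emph{Hintikka formula} $\varphi^n_{\bar{a}}(\bar{x})$ of quantifier rank $n$ equivalent to the conjunction of all rank-$\leq n$ formulas satisfied by $\bar{a}$, with the property that $(\MM,\bar{a}) \equiv_n (\NN,\bar{b})$ iff $\NN \models \varphi^n_{\bar{a}}(\bar{b})$.

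For the direction from elementary equivalence to back-and-forth equivalence, I would set $I_n$ to be the collection of partial isomorphisms $f$ with finite domain $\bar{a}$ and range $\bar{b}=f(\bar{a})$ satisfying $(\MM,\bar{a}) \equiv_n (\NN,\bar{b})$. Nonemptiness is immediate, since $\MM \equiv \NN$ forces the empty map into every $I_n$. For the Forth property, given $f\in I_{n+1}$ and $a\in\MM$, I would use the rank-$n$ Hintikka formula $\chi=\varphi^n_{\bar{a}a}$: since $\MM \models \exists y\,\chi(\bar{a},y)$ and $\exists y\,\chi$ has rank $\leq n+1$, the hypothesis $(\MM,\bar{a})\equiv_{n+1}(\NN,\bar{b})$ produces a witness $b\in\NN$ with $\NN\models\chi(\bar{b},b)$, whence $(\MM,\bar{a}a)\equiv_n(\NN,\bar{b}b)$; that $f\cup\{(a,b)\}$ remains a partial isomorphism follows from agreement on the rank-$0$ atomic and equality formulas, which guarantees both injectivity and preservation of relations. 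The Back property is symmetric.

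Conversely, given a back-and-forth sequence $(I_n)$, I would show by induction on $n$ that every $f\in I_n$ preserves all formulas of quantifier rank $\leq n$ with parameters drawn from its domain. The base case is the definition of partial isomorphism, the Boolean cases are immediate, and the existential case is exactly where Forth (and dually Back) is invoked to extend $f$ to some $g\in I_n$ and apply the induction hypothesis. Applying this to a sentence $\sigma$ of quantifier rank $n$ and any $f\in I_n$---regarding $\sigma$ as having dummy free variables matching the domain of $f$, so the parameters are irrelevant---yields $\MM\models\sigma \iff \NN\models\sigma$, so $\MM\equiv\NN$, completing the proof.
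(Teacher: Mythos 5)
The paper does not prove this statement; it is imported as a classical result (Fra\"iss\'e's theorem) with a citation to Ebbinghaus--Flum--Thomas, whose own proof proceeds exactly as you propose: reduce to a finite relational language, establish the finiteness up to equivalence of rank-$\leq n$ formulas in a fixed finite context, package these into Hintikka formulas, and run the two inductions linking $\equiv_n$ to the back-and-forth hierarchy. Your argument is correct and essentially identical to the standard one, and you rightly identify the relational reduction as indispensable rather than cosmetic --- with even one function symbol there are infinitely many pairwise inequivalent atomic formulas, so the key finiteness lemma (and hence the whole Hintikka-formula machinery) would fail without it.
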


Thus, even if $\HH$ \textit{happens} to be nowhere dense, VC infinite yet does not shatter an infinite set, in a strong sense $\HH$ is observationally indistinguishable from one in which which \textit{does} shatter an infinite set. 

This result illustrates an effect of the underlying framework of Formal Learning Theory: it works assuming an agent knows the \textit{extensional specification} of the space of observations ($\omega$) and hypotheses ($\HH$) on the nose.  However, bounded agents may only grasp the domain of observations and hypotheses intensionally, and thus know the hypothesis and sample domain only up to back-and-forth equivalence.

Viewed in this light, the VC finite classes emerge as precisely the class of nowhere dense hypotheses invariant under observable indistinguishability by finitistically bounded agents. 

\subsection{VC Finiteness is Not a Topological Notion}

In this section we argue that the topological and descriptive set theoretic tools relied upon in formal learning theory are too coarse to adequately study the short-run properties of the hypotheses of the sort encountered in machine learning.

A unifying theme of theoretical machine learning is identifying combinatorial notions of dimension on hypotheses such that "finite dimensional iff learnable" is true. These notions of dimensions standardly have the structure of a \textit{nontrivial set-theoretic ideal} on $2^X$ in the case that $X$ is an infinite set. Two examples of such dimensions are \textit{VC dimension}, characterizing the PAC learnable hypotheses, and \textit{Littlestone dimension}, characterizing the online-learnable hypotheses. 

Following the analysis of \autocite{ciesielski1995topologies} we investigate the topologies arising from such ideals and conclude that the natural topologies fail to satisfy the standard metrization requirements of Formal Learning Theory. Instead, combinatorial measures of hypotheses are better equipped to handle such questions. 

To illustrate this general point, we see that the class of VC finite hypotheses cannot be realized as the nowhere dense sets in a Hausdorff topological space. The arguments here are drawn from the analysis of topological properties of set-theoretic ideals given by Cieselski and Jasinski in \autocite{ciesielski1995topologies}.

\begin{propo}
The set of VC finite families on an infinite set $X$,
\[I_{VC}(X) = \{Y\,|\, \text{$Y\subset X$ and $VC(Y) < \infty$} \}\]
forms a proper ideal in $2^{2^X}$.
\end{propo}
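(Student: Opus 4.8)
The plan is to verify the three defining properties of a proper ideal on the Boolean algebra $2^{2^X} = \mathcal{P}(2^X)$, whose elements are hypothesis classes $\HH \subseteq 2^X$ (so the bound variable $Y$ in the statement ranges over hypotheses $\HH \subseteq 2^X$, not over subsets of $X$): that $I_{VC}(X)$ is downward closed, closed under pairwise unions, and proper. Downward closure and properness are immediate, while closure under unions is the crux, and I would obtain it directly from the Sauer--Shelah lemma already established as \autoref{lem:sauer-shelah}. I would also note that $\varnothing \in I_{VC}(X)$, so the collection is nonempty.

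First I would dispatch downward closure. If $\HH' \subseteq \HH$ and $Y_0 \subseteq X$ is shattered by $\HH'$, then every pattern in $2^{Y_0}$ realized by a restriction $h{\upharpoonright}_{Y_0}$ of some $h \in \HH'$ is a fortiori realized by that same $h \in \HH$, so $Y_0$ is shattered by $\HH$ as well. Hence $VC(\HH') \le VC(\HH)$, and $VC(\HH) < \infty$ forces $VC(\HH') < \infty$, i.e. $\HH' \in I_{VC}(X)$.

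The main step is closure under finite unions: I would show that if $VC(\HH_1) = d_1 < \infty$ and $VC(\HH_2) = d_2 < \infty$ then $VC(\HH_1 \cup \HH_2) < \infty$. The key observation is the subadditivity of the growth function: for every finite $Y \subseteq X$ we have $(\HH_1 \cup \HH_2){\upharpoonright}_Y = (\HH_1{\upharpoonright}_Y) \cup (\HH_2{\upharpoonright}_Y)$, whence $\tau_{\HH_1 \cup \HH_2}(m) \le \tau_{\HH_1}(m) + \tau_{\HH_2}(m)$ for all $m$. Setting $d = \max(d_1,d_2)$ and applying \autoref{lem:sauer-shelah} to each summand bounds the right-hand side by $2\sum_{i=0}^{d} {m \choose i}$, a polynomial in $m$ of degree $d$. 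Now if $\HH_1 \cup \HH_2$ shattered a set of size $m$, then $\tau_{\HH_1 \cup \HH_2}(m) = 2^m$; since $2^m$ eventually dominates any fixed polynomial, there is a largest such $m$, so $VC(\HH_1 \cup \HH_2)$ is finite and $\HH_1 \cup \HH_2 \in I_{VC}(X)$.

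Finally, properness: I would argue that the top element $2^X$ is not in $I_{VC}(X)$. Since $X$ is infinite, the full hypothesis class $2^X$ realizes, for every finite $Y_0 \subseteq X$ and every $g: Y_0 \to \{0,1\}$, some $h \in 2^X$ with $h{\upharpoonright}_{Y_0} = g$; thus $2^X$ shatters arbitrarily large finite sets and $VC(2^X) = \infty$. Hence $2^X \notin I_{VC}(X)$, so $I_{VC}(X) \neq 2^{2^X}$ and the ideal is proper. The only genuine work is the union step, where the anticipated obstacle is making precise that polynomial growth of $\tau$ precludes infinite VC dimension; the clean route around it is exactly the Sauer--Shelah polynomial bound played off against the exponential $2^m$ forced by shattering.
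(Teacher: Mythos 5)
Your proposal is correct and follows essentially the same route as the paper's proof: downward closure from monotonicity of shattering, properness because $2^X$ shatters arbitrarily large sets, and closure under unions via the Sauer--Shelah polynomial bound on the growth function of the union. You simply spell out the union step (subadditivity of $\tau$ plus the polynomial-versus-$2^m$ comparison) in more detail than the paper, which states it in one line.
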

\begin{proof}
First, $2^X\notin I_{VC}(X)$ since, by definition, $2^X$ shatters an infinite set. Moreover, it is clear from the definitions that if $Y \in I_{VC}$ and $Z\subset Y$ then $Z\in I_{VC}$ since every set shattered by $Z$ is shattered by $Y$. 

Finally, the Sauer-Shelah lemma implies that if $Y,Z\in I_{VC}$ then $Y\cup Z \in I_{VC}$ since the growth function of the union is polynomial.
\end{proof} 

The fact that $I_{VC}$ has the structure of a proper ideal on $2^X$ means that we may construct a topology in which the VC finite sets are precisely the closed sets. However, this topology is non-Hausdorff. 

\begin{propo}
The collection
\[\tau(2^X) = \{2^X\setminus \HH\,|\, \HH\in I_{VC}(X) \} \cup \{\varnothing\} \]
of subsets of $2^X$ forms a non-Hausdorff topology on $2^X$. 
\end{propo}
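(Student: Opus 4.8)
The plan is to verify the two topology axioms that remain nontrivial given the preceding proposition—namely that $\tau(2^X)$ is closed under arbitrary unions and finite intersections—and then to exhibit a concrete failure of the Hausdorff separation axiom. The axioms concerning $\varnothing$ and the whole space $2^X$ are immediate: $\varnothing$ is thrown in by hand, and $2^X = 2^X \setminus \varnothing$ is open because $\varnothing \in I_{VC}(X)$ (the empty family shatters nothing, so has VC dimension $0$). The bulk of the routine verification is therefore the duality between the topology axioms for $\tau(2^X)$ and the ideal axioms for $I_{VC}(X)$ already established.

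For finite intersections, I would take two nonempty open sets $2^X \setminus \HH_1$ and $2^X \setminus \HH_2$ with $\HH_1, \HH_2 \in I_{VC}(X)$; by De Morgan their intersection is $2^X \setminus (\HH_1 \cup \HH_2)$, and $\HH_1 \cup \HH_2 \in I_{VC}(X)$ precisely because the preceding proposition showed $I_{VC}(X)$ is closed under finite unions (via Sauer--Shelah). For arbitrary unions, given a family $\{2^X \setminus \HH_i\}_{i \in J}$ with each $\HH_i \in I_{VC}(X)$, De Morgan again gives $\bigcup_i (2^X \setminus \HH_i) = 2^X \setminus \bigcap_i \HH_i$, and since $\bigcap_i \HH_i \subseteq \HH_{i_0}$ for any fixed $i_0$, downward closure of the ideal yields $\bigcap_i \HH_i \in I_{VC}(X)$, so the union is open. (If the index set is empty the union is $\varnothing$, which is open by fiat.)

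The substantive part is the failure of Hausdorffness, and this is where I would focus attention. The key structural fact is that every nonempty open set is the complement of a VC \emph{finite} family, hence \emph{cofinite} in a very strong sense: since $I_{VC}(X)$ is a proper ideal, no nonempty open set can be small, and in particular any two nonempty open sets must intersect. Concretely, suppose $U_1 = 2^X \setminus \HH_1$ and $U_2 = 2^X \setminus \HH_2$ are nonempty open sets; their intersection is $2^X \setminus (\HH_1 \cup \HH_2)$, and if this were empty we would have $\HH_1 \cup \HH_2 = 2^X$, contradicting $2^X \notin I_{VC}(X)$ (the full power set shatters infinite sets) together with the fact that $\HH_1 \cup \HH_2 \in I_{VC}(X)$. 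Thus any two nonempty open sets meet, which already precludes the existence of disjoint open neighborhoods separating two distinct points.

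The main obstacle—really a matter of care rather than difficulty—is that Hausdorffness concerns separating \emph{distinct points}, so I must exhibit two distinct points $h_1 \neq h_2 \in 2^X$ and argue that they cannot be housed in disjoint open neighborhoods. Since $X$ is infinite there are certainly at least two distinct elements of $2^X$, and by the argument above \emph{every} pair of nonempty open sets intersects, so in particular any neighborhood of $h_1$ meets any neighborhood of $h_2$. Hence no two distinct points can be separated, and $\tau(2^X)$ fails the Hausdorff axiom. I would close by remarking that this is the expected behavior: a topology whose closed sets form a proper ideal is generically non-Hausdorff, reinforcing the paper's thesis that VC finiteness is not a topological notion.
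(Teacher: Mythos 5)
Your proposal is correct and follows essentially the same route as the paper: both establish the topology axioms by dualizing the ideal properties of $I_{VC}(X)$ (the paper phrases this as the complements forming a filter, you spell out the De Morgan computations), and both derive non-Hausdorffness from the observation that any two nonempty open sets must intersect since $\HH_1\cup\HH_2\in I_{VC}(X)$ can never equal $2^X$. Your additional care in noting that $2^X$ has at least two points is a harmless elaboration of the same argument.
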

\begin{proof}
Since $I_{VC}(X)$ has the structure of a set-theoretic ideal, the collection
\[F_{VC}(X) = \{2^X\setminus \HH\,|\, \HH\in I_{VC}(X) \} \]
is a nontrivial filter on $X$. Thus, the collection
\[ \tau(2^X) = F_{VC}(X) \cup \{\varnothing\} \]
is closed under arbitrary union, finite intersection, and contains $\varnothing$ and $2^X$. 

This topology is non-Hausdorff: for any $U,V \in \tau(2^X)$,
\[(U\cap V = \varnothing) \rightarrow (U=\varnothing \vee V = \varnothing)\]
since $\varnothing \notin F_{VC}(X)$ and $F_{VC}(X)$ is closed under finite intersection.
\end{proof}

Moreover, no Polish space can make all VC finite sets closed.

\begin{propo}
Let $X$ be countably infinite. Then there are $2^{2^{\aleph_0}}$ VC finite subsets of $X$. In particular, no Polish topology renders all VC finite sets closed.
\end{propo}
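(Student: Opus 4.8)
The plan is to read ``VC finite subsets of $X$'' as ``VC finite hypothesis classes $\HH \subseteq 2^X$'', i.e.\ the members of the ideal $I_{VC}(X) \subseteq 2^{2^X}$ introduced above (the earlier display writing $Y \subset X$ is most naturally understood as $Y \subseteq 2^X$), and to pin down the cardinality by a two-sided estimate. The upper bound is immediate: since $X$ is countably infinite, $|2^X| = 2^{\aleph_0}$, so the total number of subfamilies of $2^X$ is $2^{|2^X|} = 2^{2^{\aleph_0}}$, whence there can be no more VC finite families than this.

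For the matching lower bound I would exhibit a single VC finite family of full size $2^{\aleph_0}$ and then pass to its power set. The key observation is that any family $\CC \subseteq 2^X$ that is a \emph{chain} under inclusion has $VC(\CC) \le 1$: to shatter a pair $\{a,b\}$ one would need a member containing $a$ but not $b$ and another containing $b$ but not $a$, and these two members are $\subseteq$-incomparable. Now identify $X$ with $\Q$ and set $A_r = \{q \in \Q : q < r\}$ for $r \in \RR$; by density the family $\CC = \{A_r : r \in \RR\}$ is a chain of cardinality $2^{\aleph_0}$ (distinct reals cut $\Q$ differently). Since shattering is monotone in the family---any set shattered by a subfamily is a fortiori shattered by the larger family---every $\CC' \subseteq \CC$ is itself a chain, so $VC(\CC') \le 1 < \infty$. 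Thus all $2^{|\CC|} = 2^{2^{\aleph_0}}$ subfamilies of $\CC$ lie in $I_{VC}(X)$, which yields the lower bound and hence the exact count $2^{2^{\aleph_0}}$.

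For the ``in particular'' clause I would argue that a Polish space simply has too few closed sets. Every Polish space is separable metric, hence second countable, so it carries a countable base $\BB$; each open set is a union of members of $\BB$ and is thus determined by a subset of $\BB$, giving at most $2^{|\BB|} = 2^{\aleph_0}$ open sets and, by complementation, at most $2^{\aleph_0}$ closed sets. If some Polish topology on the point set $2^X$ made every VC finite family closed, then the identity on $I_{VC}(X)$ would embed it into the collection of closed sets, forcing $2^{2^{\aleph_0}} = |I_{VC}(X)| \le 2^{\aleph_0}$, contradicting Cantor's theorem $2^{\aleph_0} < 2^{2^{\aleph_0}}$. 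Hence no Polish topology renders all VC finite sets closed, in sharp contrast to the non-Hausdorff $\tau(2^X)$ built above.

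I expect the only genuine content to sit in the lower bound, and within it the single creative step of producing a VC bounded family of size continuum; the chain of Dedekind cuts makes this routine, after which everything is cardinal arithmetic. The two points worth flagging are the interpretation of the statement (families in $2^{2^X}$, not subsets of $X$) and the fact that the whole argument turns only on the strict inequality $2^{\aleph_0} < 2^{2^{\aleph_0}}$, so that no choice-sensitive or continuum-hypothesis-sensitive input is required.
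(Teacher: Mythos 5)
Your proposal is correct and follows essentially the same route as the paper: exhibit a continuum-sized VC finite family of half-lines in $\Q$ indexed by $\RR$ (the paper uses $\{(r,\infty) \mid r\in\RR\}$, you use the Dedekind cuts $\{q : q<r\}$ -- the same chain up to complementation), invoke downward closure of $I_{VC}$ to get $2^{2^{\aleph_0}}$ members, and conclude via the bound of $2^{\aleph_0}$ on the closed sets of a Polish space. Your write-up is somewhat more careful, explicitly verifying that a chain has VC dimension at most $1$ and supplying the trivial upper bound, but there is no substantive difference.
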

\begin{proof}
Since $I_{VC}$ is closed downward, it suffices to show that there is an uncountable VC finite subset of $2^X$.

Identifying $X$ with $\Q$, we may identify the family of intervals $\HH = \{ (r,\infty)\,|\, r\in \RR\}$ with a VC finite subset of $2^X$. This family has size $2^{\aleph_0}$, so $|I_{VC}| = 2^{2^{\aleph_0}}$.

Since Polish spaces have at most $2^{\aleph_0}$ many closed sets, no Polish space renders all VC finite subsets closed.
\end{proof}

Finally, we identify a mild condition on topologies guaranteeing that the ideal of nowhere dense sets does not coincide with the ideal of VC finite sets.

\begin{thm}\label{thm:comb-not-top} Let $X$ be infinite. There is no topology $\tau$ on $2^X$ such that
\begin{enumerate}
    \item There is a countable disjoint collection of nonempty open sets $U_n$ such that each $U_n$ shatters a set of size $\geq n$,
    \item Every VC finite $\HH$ is nowhere dense, and
    \item $I_{VC}(X) = I_{nd}(X)$. \qedhere
\end{enumerate}

\end{thm}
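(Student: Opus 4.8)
The plan is to derive a contradiction by manufacturing a single family $G \seq 2^X$ that is VC infinite---hence not nowhere dense by condition (3)---yet whose failure to be nowhere dense can be localized inside one of the disjoint open sets $U_m$, where it collides with the nowhere density forced by condition (2).

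First I would extract finite shattering witnesses. For each $n$, condition (1) supplies a nonempty open $U_n$ shattering some $S_n \seq X$ with $|S_n| \geq n$; shrinking $S_n$ I may assume $|S_n| = n$. For each of the $2^n$ subsets $A \seq S_n$ choose a hypothesis $h_A \in U_n$ with $h_A {\upharpoonright}_{S_n} = \chi_A$, and set $F_n = \{h_A \mid A \seq S_n\} \seq U_n$, a finite family that shatters $S_n$. Being finite, each $F_n$ is VC finite, so by condition (2) it is nowhere dense; in particular $\operatorname{int}(\overline{F_n}) = \emptyset$. Now put $G = \bigcup_n F_n$. Since $G \supseteq F_n$ shatters a set of size $n$ for every $n$, the family $G$ is VC infinite, so $G \notin I_{VC}(X) = I_{nd}(X)$ by condition (3); thus $G$ is not nowhere dense and there is a nonempty open $W \seq \overline{G}$.

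The crux is a localization step exploiting disjointness. Because $F_n \seq U_n$ and the $U_n$ are pairwise disjoint, for any index $m$ we have $G \cap U_m = F_m$, and I would upgrade this to the identity $\overline{G} \cap U_m = \overline{F_m} \cap U_m$: any $q \in U_m$ has arbitrarily small neighborhoods contained in the open set $U_m$, and on such neighborhoods $G$ and $F_m$ agree, so $q$ is a limit point of $G$ if and only if it is a limit point of $F_m$. Since $W \seq \overline{G}$ is a nonempty open subset of $\overline{G}$, picking any $p \in W$ and testing $p \in \overline{G}$ against the neighborhood $W$ shows $W \cap G \neq \emptyset$; hence $W$ meets some $F_m$, so $W \cap U_m$ is nonempty and open. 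By the localization identity, $W \cap U_m \seq \overline{G} \cap U_m = \overline{F_m} \cap U_m \seq \overline{F_m}$, exhibiting a nonempty open subset of $\overline{F_m}$ and contradicting $\operatorname{int}(\overline{F_m}) = \emptyset$.

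The main obstacle is precisely this localization identity $\overline{G} \cap U_m = \overline{F_m} \cap U_m$: a priori the closure of the whole union $G$ could acquire limit points inside $U_m$ coming from the other pieces $F_n$, and the entire argument hinges on ruling this out. The disjointness of the $U_n$ together with their openness is exactly what forecloses this, letting me transfer the ``not nowhere dense'' verdict on the VC-infinite $G$ down to the VC-finite, hence nowhere dense, local piece $F_m$. I expect no analytic input beyond this; each hypothesis is used once---condition (1) for the witnesses $F_n$, condition (2) to make each $F_m$ nowhere dense, and condition (3) (in its contrapositive form, nowhere dense $\Rightarrow$ VC finite) to make $G$ non-nowhere-dense.
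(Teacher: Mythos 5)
Your proposal is correct and follows essentially the same route as the paper: build finite shattering families $F_n \subseteq U_n$ from condition (1), take the VC-infinite union $G = \bigcup_n F_n$, and use the disjointness of the $U_n$ to localize the failure of nowhere density to a single VC-finite piece $F_m$. The paper states the key step only as ``$\HH$ is nowhere dense as each $\HH_n$ is finite and concentrated on a single open set $U_n$,'' and your explicit localization identity $\overline{G}\cap U_m = \overline{F_m}\cap U_m$ is precisely the detail that justifies it.
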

\begin{proof} 

We follow the proof strategy outlined in \autocite[Thm 3.4]{ciesielski1995topologies} by showing that if $\tau$ were a topology on $X$ making all VC finite sets nowhere dense then there is a VC infinite nowhere dense subset. 


By hypothesis 1, 
there exists a countably infinite set of disjoint open subsets $U_n$ shattering a set of size $\geq n$. 

Let $\HH_n\subset U_n$ be a finite hypothesis class shattering a set of size $n$. Then the hypothesis $\HH = \bigcup\limits_{n\in\omega} \HH_n$ shatters arbitrarily large subsets by construction, and is nowhere dense as each $\HH_n$ is finite and concentrated on a single open set $U_n$.
\end{proof}


In particular, the standard results of descriptive set theory---requiring that the topology in question be Polish and hence Hausdorff---do not apply to \textit{any} topology rendering the learnable sets nowhere dense.
 
\section{Rigorous Foundations for Severe Testing}

Null Hypothesis Statistical Testing (NHST) is a ubiquitous method of statistical inference. As Wasserman \autocite[Chapter 10]{wasserman2006all} describes it, the basic data of a Null Hypothesis Statistical Testing consists of
\begin{enumerate}
    \item A space $\Theta$ of probability distributions on sample space $\Omega$,
    \item A partition $\Theta = \HH_0 \cup \HH_1$
    \item A random variable $T:\Omega \to \RR$ called the test statistic, 
    \item A critical value $c\in \RR$.
\end{enumerate}
In the setup of a two-sided test, it is assumed that the null hypothesis $\HH_0$ is a \textit{single distribution}, i.e. $\HH_0 = \{\theta_0\}$, and therefore unambiguously determines a probability measure $\PP_{\HH_0}$ that we may use to infer probability statements about the test statistic $T$. To \textit{reject} hypothesis $\HH_0$, a statistical version of modus tollens is invoked, by replacing $\HH_0\rightarrow (T(x) \leq c)$ with $\PP_{\mathcal{H}_0}(T(x) > c) < \epsilon$:
\[\begin{nd}
\hypo {1}{\PP_{\mathcal{H}_0}(T(x) > c) < \epsilon}
\hypo {2}{T(x)>c}
\have{3}{Rej(\HH_0)}
\end{nd}.
\]

Mayo and other error statisticians instead advocate for a modern recasting of NHST---severe testing---as the appropriate framework guiding the use of statistical methods.  Central to the error statistician is the question:

\begin{quote}
When do data $x$ provide good evidence for/a good test of hypothesis $H$?
\end{quote}

The error statistician will invoke some form of a \textbf{Severity Principle} to answer this question:
\begin{quote}
(Weak Severity Principle) Data $x$ \textbf{does not} provide good evidence for $H$ if $x$ is the result of a \textbf{test procedure} $T$ with very low probability of uncovering the falsity of $H$ \autocite[21]{mayo2010introduction}. 
\end{quote}

A converse is given by:
\begin{quote}
(Full Severity Principle) Data $x$ provides good evidence for $H$ to the extent that test $T$ has been \textbf{severely passed} by $H$ \autocite[21]{mayo2010introduction}.
\end{quote}

The error statistician naturally asks \textit{which} hypotheses are amenable to error-theoretic analysis. This question is of utmost importance as the Full Severity Principle suggests the following account of scientific content: the hypotheses $H$ that have scientific content are precisely those which are severely testable. But what is the definition of severe testing?

The notion of severe testing as described by Mayo is defined as follows:
\begin{defn}
A hypothesis $H$ passes a severe test relative to experiment $E$ with data $x$ if (and only if):
\begin{enumerate}[label=\roman*]
\item $x$ agrees with or ``fits'' $H$ (for a suitable notion of fit), and
\item experiment $E$ would (with very high probability) have produced a result that fits $H$ less well than $x$ does, if $H$ were false or incorrect. \autocite[99]{mayo2005evidence} \qedhere
\end{enumerate}
\end{defn}

We turn now to discussing prongs $(i)$ and $(ii)$ in the above definition.

Regarding $(i)$, Mayo writes that ``fit'' should at the very least be 
\begin{equation*} \label{eq:mayo_fit}
\PP(x;H) > \PP(x; \neg H)
\end{equation*}
arguing that 
\begin{quote} 
any measure of evidential relationship, degree of confirmation, probability, etc., can be regarded as supplying a fit measure. Severity can then be assessed by computing the error probability required in (ii). \autocite[124]{mayo2005evidence}
\end{quote} 
If the notation $\PP(x;H)$ is unfamiliar, that is for good reason: Mayo explains that
\begin{quote}
I am using ``;'' in writing $\PP(x;H)$---in contrast to the notation typically used for a conditional probability, $\PP(x|H)$---in order to emphasize that severity does \textit{not} use a conditional probability which, strictly speaking, requires that the prior probabilities $\PP(H_i)$ be well-defined for an exhaustive set of hypotheses. \autocite[102]{mayo2005evidence}
\end{quote} 
This is on the face of it a key departure from the framework of NHST, which requires us to \textit{only} work with probability sentences involving $\PP_{\HH_0}$.
A serious difficulty for this account of severe testing is that no general construction of $\PP(x;\neg\HH)$ is given. 

For example, let $\HH_{r}\subset 2^{\omega}$ be the  a statement such as ``the long run relative frequency of heads in a countable sequence coin flips is equal to $r$.'' The complement $\HH_{r}^c \subset 2^{\omega}$ can be decomposed as the disjoint union
\[\HH_r^c = \HH_{\uparrow} \cup \bigcup\limits_{s\neq r} \HH_s \]
where $\HH_{\uparrow}$ is the set of all infinite binary strings with non-convergent limiting relative frequency as well as $\HH_{r'}$ for all $r' \neq r$. Moreover, $\HH_{\uparrow}$ and $\HH_{s}$ are dense and codense in $2^{\omega}$, so the complement $\HH_r^c$ has rich topological structure. There is no clear way to construct a probability measure that amalgamates all $\HH^{c}$ into a probability measure $\PP_H(x;\HH^c)$ if one does not avail oneself to an aggregation function such as a Bayesian prior. 

Even restricting only to the probability distributions on $\{0,1\}$, which we identify with the interval $[0,1]$, non-Bayesian methods of aggregating families of probability distributions---such as the Maximum Likelihood Estimator---are generally not well-defined. Recall the definition of the Maximum Likelihood Estimator \autocite[Definition 9.7]{wasserman2006all} 
\begin{defn}
Let $\Theta$ be a family of distributions over $\Omega$ and $X_1,\dots, X_n:\omega \to \RR$ be an IID set of random variables. The likelihood function is given by
\[\LL_n(\theta) = \prod\limits_{i=1}^{n} f(X_i;\theta) \]
where the $f(X_i;\theta)$ are the probability density functions of the random variables $X_i$ with respect to distribution $\theta$.
\end{defn}

The maximum likelihood estimator is typically defined as \textit{the} value $\operatorname{MLE}(\theta,n) \in \Theta$ maximizing $\LL_n(\theta)$. However, this definition is misleading, as $\operatorname{MLE}(\theta,n)$ may fail to exist or to be unique.

First, the Maximum Likelihood Estimator may fail to be unique. Let $\Theta = \{0,1\}$ be the space of distributions asserting that all flips of a coin are heads or tails. Confronted with observations $\overline{x} = (H,T)$, the likelihood functions have values
\[\LL_2(0) = 0 = \LL_2(1)\]
and so \textit{both} $0$ and $1$ maximize the likelihood function relative to $\theta$.

Second, the Maximum Likelihood Estimator may fail to exist within $\Theta$. Let $\HH_0 = \{\frac{1}{2}\}$ and $\HH_1 = \HH_0^c = [0,\frac{1}{2})\cup(\frac{1}{2},1]$. Suppose that $\overline{x} = (H,T)$. A routine calculation \autocite[Example 9.10]{wasserman2006all}
\[\LL_n(\theta) = \prod\limits_{i=1}^n p_H^{X_i}(1-p_H)^{1-X_i} = p_H^S \times (1-p_H)^{n-S} \]
where $S$ is the number of heads in the sequence of coin flips. Taking the derivative of $\LL_n(\theta)$ and setting it to zero we find that $\theta = \frac{1}{2}$ is the unique maximum likelihood estimator on $[0,1]$ with likelihood $\frac{1}{4}$. While $\frac{1}{2}\notin \HH_1$, this does not on its own show that the Maximum Likelihood Estimator does not exist in $\HH_1$. In fact, \textit{any} paritition of a compact connected space $\Theta = \HH_1\cup\HH_2$ into nonempty subfamilies of distributions will suffer this defect since the existence of a $\theta\in\HH_i$ maximizing likelihood is guaranteed only if $\HH_i$ is closed. However, it is not hard to see that the image $\LL_n(\HH_1) = [0,\frac{1}{4})$. In other words, the likelihood function on $\HH_1$ is arbitrarily close to $\frac{1}{4}$ but never obtains that value. So, no maximum likelihood estimator exists on $\HH_1$. Thus, the standard frequentist method of aggregating probability distributions in light of data is not even generally well-defined, and cannot serve as a definition of $\PP(x;\neg\HH)$.

Regarding condition (ii) in the definition of severe testing, Mayo requires the satisfaction of the following conditional: if $\HH$ is false, then $E$ would have produced a result that fits $\HH$ less well than $x$ does with high probability. In this conditional, we assume that $\HH$ is false and tasked with computing \textit{some} probability given $\neg \HH$ and the specification of the experiment $E$. This poses a serious problem for her account of severe testing; she gives no general theory of semantics for the probabilistic statements comprising the definition of severe, as no method for determining how to construct a probability distribution $\PP_{E,\neg\HH}$ from experiment $E$ is given. 
With no way to determine what a ``good'' probability distribution is, it is difficult to make sense of this.

For instance, consider the case of the hypothesis $\HH_0$ expressing ``all flips of a coin are heads.'' Let $E_n$ be the experiment given by flipping the coin some very large number $n$ of times. Then, given such an experiment $E$, 

\begin{propo}
Let $\epsilon > 0$. There exists a probability distribution $\PP_n$ on $\{H,T\}$ such that the probability $p_H$ of flipping at least one tails $T$ on $n$ IID flips is $<\epsilon$. 
\end{propo}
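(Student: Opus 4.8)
The plan is to observe that a probability distribution $\PP_n$ on $\{H,T\}$ is completely determined by the single parameter $p_H = \PP_n(H) \in [0,1]$, with $\PP_n(T) = 1 - p_H$. Under $n$ independent and identically distributed flips, the event of flipping \emph{no} tails coincides with the event of flipping all heads, which has probability $p_H^n$. Hence the probability of flipping \emph{at least one} tails is the complementary value $1 - p_H^n$.

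First I would reduce the claim to a single inequality. We seek $p_H$ with $1 - p_H^n < \epsilon$, equivalently $p_H^n > 1 - \epsilon$. Without loss of generality assume $\epsilon < 1$ (otherwise any distribution works, as every probability is at most $1$). Taking $n$-th roots, the condition becomes $p_H > (1-\epsilon)^{1/n}$.

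Next I would check that this can be met by a legitimate probability value. Since $0 < 1 - \epsilon < 1$, we have $(1-\epsilon)^{1/n} < 1$, so the interval $\left((1-\epsilon)^{1/n},\, 1\right)$ is nonempty; choosing any $p_H$ in it---say $p_H = \tfrac{1}{2}\left(1 + (1-\epsilon)^{1/n}\right)$---gives a distribution $\PP_n$ with $p_H < 1$ and $1 - p_H^n < \epsilon$, as required.

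The computation is elementary, so there is no real obstacle; the one point deserving care is the insistence that $p_H < 1$ strictly. This matters because the proposition is meant to exhibit a distribution in the \emph{complement} of the null hypothesis $\HH_0$ (``all flips are heads'') under which the experiment $E_n$ still fails, with probability exceeding $1 - \epsilon$, to produce any tails---that is, any evidence against $\HH_0$. Keeping $p_H$ strictly below $1$ ensures $\PP_n$ is a genuine alternative rather than $\HH_0$ itself, which is exactly what makes the example bear on severity: the probability of ``uncovering the falsity'' of $\HH_0$ can be pushed below any prescribed threshold by a suitable choice of alternative, so no severity bound exists prior to fixing such a choice.
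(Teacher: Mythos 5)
Your proposal is correct and follows essentially the same route as the paper's own proof: compute the probability of at least one tails as $1-p_H^n$ and solve the inequality $1-p_H^n<\epsilon$ to get $p_H>(1-\epsilon)^{1/n}$. Your additional step of explicitly exhibiting a valid $p_H$ in the nonempty interval $\left((1-\epsilon)^{1/n},1\right)$ is a small but welcome completion that the paper leaves implicit.
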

\begin{proof}
Let $\PP$ be a probability distribution on  $\{H,T\}$. Let $p_H$ be the probability of heads. The probability $p$ of flipping at least one tails $T$ on $n$ i.i.d. flips is
\[ \PP(\text{At least one Tails}) = 1- \PP(\text{All Heads}) =  1-p_H^n. \]
Let $\epsilon > 0$. Then 
\[ \PP(\text{At least one Tails}) = 1-p_H^n < \epsilon \]
is equivalent to saying
\[p_H > (1-\epsilon)^{\frac{1}{n}}. \qedhere\]
\end{proof}

The upshot is that the specification of the experimental setup itself does not determine \textit{a priori} the relevant probability distribution is. Moreover, this hypothesis is as falsifiable as it can be: for each coin flip, $\HH$ is compatible with only a single outcome. Yet, on a strict reading of Mayo's definition, $\HH$ cannot be severely tested. 

Rather, it seems to me that the combinatorics of the hypothesis---not any notion of probability---are what make $\HH$ severely testable. The critical element of Mayo's definition of severe testing is that the data $x$ be compatible with $\HH$ and---simultaneously---highly incompatible with $\neg \HH$. That is, for a hypothesis $\HH$ to be severely tested by an experiment with $n$ observations $x$, we would require:
\begin{enumerate}
    \item $x\in \HH{\upharpoonright}_{[n]}$, and
    \item $\HH{\upharpoonright}_{[n]} \ll \HH^c{\upharpoonright}_{[n]}$.
\end{enumerate}
These requirements are precisely captured by the previously-defined notion of \textit{surprise}: if $\HH{\upharpoonright}_{[n]} \ll \HH^c{\upharpoonright}_{[n]}$ then $\frac{\HH{\upharpoonright}_{[n]}}{2^n} < \frac{\HH{\upharpoonright}_{[n]}}{\HH^c{\upharpoonright}_{[n]}}  \approx 0$. This motivates the following definition:

\begin{defn}
Let $f: \omega \to X$ be a sample, $\HH\subset 2^X$ a hypothesis, $n\in \omega$, and $\epsilon > 0$. We say that $(\HH,f,n)$ is \textit{severely surprising} at level $\epsilon$ provided the observed data $x\in \HH{\upharpoonright}_{f([n])}$,
\[ S(\HH,f,n) > 1-\epsilon\]
and
\[ S(\HH,f,n) > S(\HH^c,f,n). \qedhere \]
\end{defn}

Taking a step back, it is worth relating this definition back to probability theory. Crucially, the definition of surprise is non-probabilistic in the sense that  there probabilistic \textit{frequency semi-measure} lurking in the background:

\begin{defn}
Let $f:\omega\to X$ be a sample and $n\in \omega$. For every $\HH  \in 2^{\omega}$ $\mu_{f,n}:2^{\omega} \to [0,1]$ as follows:
\[ \mu_{f,n}(\HH) = \frac{|\HH{\upharpoonright}_{f([n])}|}{2^n}.\qedhere\]
\end{defn}

\begin{propo}
Let $f:\omega\to X$ be a sample and $n\in \omega$. Then $\mu = \mu_{f,n}$ is a bounded sub-additive function on $2^{2^{\omega}}$. More precisely, for $\HH_1,\HH_2\in 2^{\omega}$ we have
\[\mu(\HH_1\cup \HH_2) \leq \mu(\HH_1) + \mu(\HH_2).\]
Moreover, a necessary and sufficient condition for the inequality above to be strict for $\HH_1, \HH_2\in 2^{\omega}$ is that 
\[ \HH_1{\upharpoonright}_{f([n])} \cap \HH_2{\upharpoonright}_{f([n])} \neq \varnothing. \]
Finally, every dense-codense $\mathcal{D}\subset 2^{\omega}$ satisfies $\mu(\mathcal{D}) = \mu(\mathcal{D}^c) = 1$ so that 
\[\mu(\mathcal{D}\cup \mathcal{D}^c) = 1 < 2 = \mu(\mathcal{D}) + \mu(\mathcal{D}^c). \qedhere \]
\end{propo}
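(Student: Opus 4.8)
The plan is to reduce every claim to the single observation that restriction to a finite set commutes with unions, together with the finite inclusion--exclusion identity. Throughout, write $F = f([n])$, a subset of $X$ of size exactly $n$ since $f$ is injective, so that $|2^{F}| = 2^n$. Boundedness is immediate: for any $\HH$ we have $\HH{\upharpoonright}_{F} \subseteq 2^{F}$, hence $0 \leq |\HH{\upharpoonright}_{F}| \leq 2^n$ and $0 \leq \mu(\HH) \leq 1$.

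First I would establish the key identity
\[ (\HH_1 \cup \HH_2){\upharpoonright}_{F} = \HH_1{\upharpoonright}_{F} \cup \HH_2{\upharpoonright}_{F}. \]
The inclusion $\supseteq$ is clear, and for $\subseteq$ any $h \in \HH_1 \cup \HH_2$ has $h{\upharpoonright}_{F}$ lying in one of the two restrictions. Sub-additivity then follows from the finite union bound $|A \cup B| \leq |A| + |B|$ applied to $A = \HH_1{\upharpoonright}_{F}$ and $B = \HH_2{\upharpoonright}_{F}$, after dividing by $2^n$.

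For the characterization of strictness, I would sharpen the union bound to the exact count $|A \cup B| = |A| + |B| - |A \cap B|$. Dividing by $2^n$ shows $\mu(\HH_1 \cup \HH_2) = \mu(\HH_1) + \mu(\HH_2) - |A \cap B|/2^n$, so the sub-additive inequality is strict precisely when $|A \cap B| > 0$, i.e. when $\HH_1{\upharpoonright}_{F} \cap \HH_2{\upharpoonright}_{F} \neq \varnothing$, as claimed. Finally, for the dense--codense statement, the step to isolate is the translation of topological density into a combinatorial condition on restrictions. The basic open sets of the product topology on $2^X$ are the cylinders $[s]$ determined by finite partial functions $s$, and $[s]$ meets $\mathcal{D}$ precisely when some element of $\mathcal{D}$ extends $s$. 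Specializing to partial functions with domain $F$, density of $\mathcal{D}$ forces every $s \in 2^{F}$ to be realized, i.e. $\mathcal{D}{\upharpoonright}_{F} = 2^{F}$ and hence $\mu(\mathcal{D}) = 1$; applying the same reasoning to the dense set $\mathcal{D}^c$ gives $\mu(\mathcal{D}^c) = 1$. Since $\mathcal{D} \cup \mathcal{D}^c = 2^X$ realizes all $2^n$ patterns, $\mu(\mathcal{D} \cup \mathcal{D}^c) = 1$, and the displayed strict inequality $1 < 2$ follows; note this is also an instance of the strictness criterion, since $\mathcal{D}{\upharpoonright}_{F} \cap \mathcal{D}^c{\upharpoonright}_{F} = 2^{F} \neq \varnothing$.

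The only genuine subtlety---and the step I would be most careful with---is the density translation in the last paragraph: one must use the correct basis of the product topology (cylinders fixing finitely many coordinates) and verify that density against this basis is equivalent to realizing every finite pattern. Everything else is the finite inclusion--exclusion identity applied coordinatewise.
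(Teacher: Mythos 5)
Your proposal is correct and follows essentially the same route as the paper: both rest on the inclusion--exclusion identity for the restricted sets, yielding the exact defect $\mu(\HH_1)+\mu(\HH_2)-\mu(\HH_1\cup\HH_2) = |\HH_1{\upharpoonright}_{f([n])}\cap \HH_2{\upharpoonright}_{f([n])}|/2^n$, and both handle the dense--codense case by observing that density forces $\mathcal{D}{\upharpoonright}_{f([n])} = 2^{f([n])}$. Your explicit verification that restriction commutes with union, and your unpacking of density against the cylinder basis, are steps the paper's proof uses implicitly; making them explicit is a minor improvement rather than a different argument.
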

\begin{proof} 
Expanding terms, we find that
\begin{equation*}
\begin{split}
\mu(\HH_1) + \mu(\HH_2) & =  \frac{|\HH_1{\upharpoonright}_{f([n])}|+|\HH_2{\upharpoonright}_{f([n])}|}{2^n} \\
    & = \frac{|(\HH_1{{\upharpoonright}_{f([n])}}\setminus\HH_2{\upharpoonright}_{f([n])})| + 2|(\HH_1{{\upharpoonright}_{f([n])}} \cap \HH_2{\upharpoonright}_{f([n])})| + |(\HH_2|_{f([n])}\setminus \HH_1{\upharpoonright}_{f([n])})|}{2^n} \\
    & \geq   \frac{|(\HH_1{\upharpoonright}_{f([n])}\setminus\HH_2{\upharpoonright}_{f([n])})| + |(\HH_1{\upharpoonright}_{f([n])} \cap \HH_2{\upharpoonright}_{f([n])})| + |(\HH_2{\upharpoonright}_{f([n])}\setminus \HH_1{\upharpoonright}_{f([n])})|}{2^n} \\
    & = \mu(\HH_1 \cup \HH_2)
\end{split}
\end{equation*}
By the above chain of equalities, we see that 
\[\mu(\HH_1)+\mu(\HH_2) - \mu(\HH_1\cup\HH_2) = \frac{|(\HH_1{\upharpoonright}_{f([n])} \cap \HH_2{\upharpoonright}_{f([n])})|}{2^n}.\]
Now, while $\HH_1\cap \HH_2 = \varnothing$, that does not imply that $(\HH_1{\upharpoonright}_{f([n])} \cap \HH_2{\upharpoonright}_{f([n])}) = \varnothing.$

Finally, suppose that $\mathcal{D}$ is dense-codense. Then $\mathcal{D}|_{f([n])} = 2^{f([n])} = \mathcal{D}^c|_{f([n])}$, so 
\[\mu(\mathcal{D}) =\mu(\mathcal{D}) = \frac{|2^{f([n])}|}{|2^{f([n])}|} = 1. \qedhere\]
\end{proof}

The relationship between $\mu_{f,n}$ and the surprise function $S(-,f,n)$ is that 
\[S(\HH,f,n) = 1 - \mu_{f,n}(\HH).\]
Therefore

\begin{propo}\label{prop:surprise_ineq}
For all $\HH,f,n$,
\[S(\HH,f,n) + S(\HH^c,f,n) \leq  1\qedhere\]
\end{propo}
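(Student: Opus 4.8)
The plan is to reduce the claimed inequality directly to the subadditivity of the frequency semi-measure $\mu_{f,n}$ established in the preceding proposition. First I would rewrite everything in terms of $\mu_{f,n}$ using the stated identity $S(\HH,f,n) = 1 - \mu_{f,n}(\HH)$, applied to both $\HH$ and $\HH^c$. Under this substitution the target inequality
\[
S(\HH,f,n) + S(\HH^c,f,n) \leq 1
\]
becomes $\bigl(1 - \mu_{f,n}(\HH)\bigr) + \bigl(1 - \mu_{f,n}(\HH^c)\bigr) \leq 1$, which rearranges to the equivalent lower bound
\[
\mu_{f,n}(\HH) + \mu_{f,n}(\HH^c) \geq 1 .
\]
So the content of the proposition is entirely a statement about $\mu_{f,n}$, and the $1$ on the right of the surprise inequality is just the total mass $\mu_{f,n}$ assigns to the whole space.

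The second step is to compute $\mu_{f,n}(\HH \cup \HH^c)$ exactly. Since $\HH \cup \HH^c = 2^{\omega}$, its restriction to the finite index set $f([n])$ is all of $2^{f([n])}$, which has exactly $2^n$ elements; hence $\mu_{f,n}(\HH \cup \HH^c) = 2^n / 2^n = 1$. The final step is to apply subadditivity (the preceding proposition) with $\HH_1 = \HH$ and $\HH_2 = \HH^c$, which yields $\mu_{f,n}(\HH \cup \HH^c) \leq \mu_{f,n}(\HH) + \mu_{f,n}(\HH^c)$. Combining the two gives $1 \leq \mu_{f,n}(\HH) + \mu_{f,n}(\HH^c)$, which is precisely the rearranged inequality, completing the argument.

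There is no genuine obstacle here; the only thing worth flagging is that one must invoke subadditivity in the correct direction. Because $\mu_{f,n}$ is merely subadditive rather than additive, it does \emph{not} give equality $\mu_{f,n}(\HH) + \mu_{f,n}(\HH^c) = 1$ — indeed the dense-codense examples show the sum can equal $2$. What subadditivity does supply is an \emph{upper} bound on the mass of the union in terms of the summed masses, and since the union exhausts the space with mass $1$, this is exactly a \emph{lower} bound on $\mu_{f,n}(\HH) + \mu_{f,n}(\HH^c)$. Recognizing that this is the only inequality needed, and that no additivity hypothesis is available or required, is the single conceptual point of the proof.
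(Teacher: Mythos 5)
Your argument is correct and is exactly the paper's proof spelled out: the paper disposes of this proposition with ``immediate from the sub-additivity of $\mu_{f,n}$,'' and your reduction via $S(\HH,f,n) = 1 - \mu_{f,n}(\HH)$, the computation $\mu_{f,n}(\HH\cup\HH^c) = 1$, and the application of subadditivity in the lower-bound direction is precisely the content of that remark.
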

\begin{proof}
Immediate from the sub-additivity of $\mu_{f,n}$
\end{proof}

\begin{remark}
While the surprise function $S(\HH,f,n)$ has a direct epistemic interpretation, the above  the \textit{co-surprise function} has structure reminiscent of a probability measure. Let $S^{\operatorname{co}}(\HH,f,n) = S(\HH^c,f,n)$. Then
\begin{enumerate}
    \item $S^{\operatorname{co}}(\varnothing,f,n) = S(2^{\omega},f,n) = 0$,
    \item $S^{\operatorname{co}}(2^{\omega},f,n) = S(\varnothing,f,n) = 1$,
    \item $S^{\operatorname{co}}(\HH,f,n) + S^{\operatorname{co}}(\HH^c,f,n) = S(\HH^c,f,n) + S(\HH,f,n) \leq  1 $, and
    \item if $\HH_1 \subset \HH_2$ then $S^{\operatorname{co}}(\HH_1,f,n) \leq S^{\operatorname{co}}(\HH_2,f,n)$ as $\HH_1^c \supseteq \HH_2^c$.
\end{enumerate}
Co-surprise fails to be finitely additive, as any dense-codense subset $\mathcal{D}$ of $2^{\omega}$ has $S^{\operatorname{co}}(\mathcal{D},f,n) = 0 = S^{\operatorname{co}}(\mathcal{D}^c,f,n)$ as \[\mathcal{D}{\upharpoonright}_{f([n])} = 2^{f([n])} = \mathcal{D}^c{\upharpoonright}_{f([n])} ,\] so is not a measure.

In fact, this function is very far from being a measure in the sense that the maximal Boolean subalgebra of $2^{2^X}$ on which $S^{co}(-,f,n)$ is a finitely additive probability measure is rather small. For each $s\in 2^{f([n])}$, let 
\[ \mathcal{J}_s = \{h\in 2^X\,|\, h\supseteq s\}.\]
It is clear that 
\[ \HH{\upharpoonright}_{f([n])} \cap \HH^c{\upharpoonright}_{f([n])} = \varnothing \]
just in case for some $S\subseteq f([n])$
\[ \HH = \bigcup\limits_{s\in S} \mathcal{J}_s,\]
for if $\mathcal{J}_s \cap \HH \neq \varnothing$ and $\mathcal{J}_s\cap \HH^c \neq \varnothing$ then 
\[\HH{\upharpoonright}_{f([n])} \cap \HH^c{\upharpoonright}_{f([n])} \supseteq \{s\} \neq \varnothing.\] From this observation it is we may conclude that the Boolean algebra
\[\mathcal{A}_{f,n} = \{\mathcal{J}_s\,|\, s\in 2^{f([n])} \} \subseteq 2^{2^X} \]
is \textit{the} maximal Boolean algebra on which $S^{co}$ is a probability measure. 

This Boolean algebra is naturally isomorphic to $2^{f([n])}$ generated by the assignment $\mathcal{J}_s \mapsto s$. Under this identification, $S^{co}(-,f,n)$ coincides with the uniform measure on $2^{f([n])}$:
\[S^{co}(\mathcal{J}_s,f,n) = \frac{1}{2^n}.\]

\begin{remark}
There is a notion of \textit{conditional} co-surprise analogous to conditional probability. Since $S^{co}$ is monotonic, if $\HH,\mathcal{J}\in 2^{2^X}$ then
\[S^{co}_{\mathcal{J}}(\HH) = \frac{S^{co}(\HH\cap\mathcal{J})}{S^{co}(\mathcal{J})}\]
is a well-defined function on $2^{2^X}$ with range $[0,1]$ whenever $S^{co}(\mathcal{J}) \neq 0$.
\end{remark}

Thus, while $2^{2^{X}}$ has large cardinality, $S^{co}(-,f,n)$ is only a probability measure on a subalgebra of size $2^n$, identifiable with the uniform measure on $2^n$.
\end{remark}

\autoref{prop:surprise_ineq} implies that

\begin{propo}
Suppose that $(\HH,f,n)$ is such that $S(\HH,f,n) \geq 1-\epsilon$ for $0< \epsilon <\frac{1}{2}$. Then $(\HH,f,n)$ is severely surprising at level $\epsilon < \frac{1}{2}$.  

In particular, if $\epsilon < \frac{1}{2}$ at most one of $\HH,\HH^c$ is severely surprising at level $\epsilon$.
\end{propo}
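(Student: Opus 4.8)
The plan is to read all three clauses in the definition of severe surprise off the single inequality of \autoref{prop:surprise_ineq}, namely $S(\HH,f,n)+S(\HH^c,f,n)\le 1$, together with the standing hypothesis $0<\epsilon<\tfrac12$. First I would record exactly what must be verified for $(\HH,f,n)$ to be severely surprising at level $\epsilon$: that there is observed data $x\in\HH{\upharpoonright}_{f([n])}$, that $S(\HH,f,n)>1-\epsilon$, and that $S(\HH,f,n)>S(\HH^c,f,n)$. The first of these is a presupposition of the testing setup rather than something to be derived: we are handed data $x$ to be tested against $\HH$, so $\HH{\upharpoonright}_{f([n])}$ is nonempty, equivalently $S(\HH,f,n)<1$.

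The core step is a single chain of inequalities. Rearranging \autoref{prop:surprise_ineq} gives $S(\HH^c,f,n)\le 1-S(\HH,f,n)$, and substituting the hypothesis $S(\HH,f,n)\ge 1-\epsilon$ yields $S(\HH^c,f,n)\le\epsilon$. Since $\epsilon<\tfrac12$ we have $\epsilon<1-\epsilon$, so
\[
S(\HH^c,f,n)\ \le\ \epsilon\ <\ 1-\epsilon\ \le\ S(\HH,f,n).
\]
The outer inequality is precisely the dominance clause $S(\HH,f,n)>S(\HH^c,f,n)$, while $1-\epsilon\le S(\HH,f,n)$ supplies the fit clause. The only boundary case is $S(\HH,f,n)=1-\epsilon$, which the definition's strict requirement excludes; I would therefore read the hypothesis as the intended strict bound $S(\HH,f,n)>1-\epsilon$, so that the fit clause holds outright. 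This establishes that $(\HH,f,n)$ is severely surprising at level $\epsilon$.

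For the \emph{in particular} clause I would argue by contradiction. Suppose both $\HH$ and $\HH^c$ were severely surprising at level $\epsilon$. Applying the fit clause to each gives $S(\HH,f,n)>1-\epsilon$ and $S(\HH^c,f,n)>1-\epsilon$; adding these and using $\epsilon<\tfrac12$ gives
\[
S(\HH,f,n)+S(\HH^c,f,n)\ >\ 2(1-\epsilon)\ =\ 2-2\epsilon\ >\ 1,
\]
which contradicts \autoref{prop:surprise_ineq}. Hence at most one of $\HH,\HH^c$ can be severely surprising at level $\epsilon$.

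I expect no serious obstacle: every inequality is forced by subadditivity and the slack $1-2\epsilon>0$. The only point demanding care is the bookkeeping of strict versus non-strict inequalities—matching the hypothesis $S(\HH,f,n)\ge 1-\epsilon$ to the definition's strict fit requirement $S(\HH,f,n)>1-\epsilon$, and noting that the observed-data clause is furnished by the testing context rather than derived. Once those are pinned down, both assertions fall directly out of the two displayed chains.
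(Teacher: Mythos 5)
Your argument is correct and is exactly the paper's intended route: the paper's proof is simply ``Immediate from the above inequality on surprise,'' i.e.\ from \autoref{prop:surprise_ineq}, and you have filled in precisely the chain $S(\HH^c,f,n)\le\epsilon<1-\epsilon\le S(\HH,f,n)$ that makes this immediate. Your side remarks about the strict-versus-nonstrict boundary case and the observed-data clause being supplied by context are accurate readings of loose ends in the statement rather than gaps in your proof.
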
 
\begin{proof}
Immediate from the above inequality on surprise.
\end{proof}

On the other hand, it is possible for \textit{neither} $\HH$ nor $\HH^c$ to be severely surprising along sample $f$ by observation $n$.

\begin{propo}
Let $\HH\subset 2^{\omega}$ be dense-codense. Then for all $f$ and $n$,
\[S(\HH,f,n) = S(\HH^c,f,n) = 0. \qedhere\]
\end{propo}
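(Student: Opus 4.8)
The plan is to reduce the claim to the topological meaning of surprise being zero, namely that $S(\HH,f,n) = 0$ holds exactly when $\HH$ shatters the finite set $f([n])$. Unwinding the definition, $S(\HH,f,n) = 0$ iff $|\HH{\upharpoonright}_{f([n])}| = 2^{|f([n])|}$, which is to say $\HH{\upharpoonright}_{f([n])} = 2^{f([n])}$. So the entire proposition amounts to showing that a dense hypothesis shatters every finite subset of $\omega$, and then applying this to both $\HH$ and $\HH^c$.

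First I would record the correspondence between strings on a finite set and basic open sets. For a finite $F \subset \omega$ and a string $s \in 2^{F}$, the basic open set $U_s = \{g \in 2^{\omega} \mid g{\upharpoonright}_{F} = s\}$ is always nonempty. Density of $\HH$ in the product topology means precisely that $\HH \cap U \neq \varnothing$ for every nonempty open $U$, and in particular for every such $U_s$.

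Next, fix $n$ and set $F = f([n])$, which is a finite subset of $\omega$ since $f$ is injective. For an arbitrary $s \in 2^{f([n])}$, density supplies some $h \in \HH$ with $h{\upharpoonright}_{f([n])} = s$, so that $s \in \HH{\upharpoonright}_{f([n])}$. As $s$ ranges over all of $2^{f([n])}$ this yields $\HH{\upharpoonright}_{f([n])} = 2^{f([n])}$, hence $S(\HH,f,n) = 0$ by the reduction above. Because codensity of $\HH$ is exactly the density of $\HH^c$, the identical argument applied to $\HH^c$ gives $S(\HH^c,f,n) = 0$.

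There is no substantive obstacle here; the content is entirely the translation of density into the shattering of arbitrary finite sets, a fact already used implicitly in the co-surprise computations above. The only point requiring care is the observation that each $U_s$ is nonempty---so that density genuinely forces a witness---which holds because $2^{\omega}$ carries the full product topology and any finite partial specification extends to a full binary sequence.
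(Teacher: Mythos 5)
Your proof is correct and follows essentially the same route as the paper: both reduce the claim to the fact that density of $\HH$ (resp.\ $\HH^c$) forces $\HH{\upharpoonright}_{f([n])} = 2^{f([n])}$, making the surprise vanish. The paper simply asserts this restriction identity in one line, whereas you spell out the witnessing argument via the nonempty basic open sets $U_s$; this is the same idea, just made explicit.
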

\begin{proof}
We show the argument for $\HH$ assuming density; the case of $\HH^c$ follows by codensity of $\HH$. Since $\HH$ is dense, $\HH{\upharpoonright}_{f([n])} = 2^{f([n])}$, so 
\[S(\HH,f,n) = 1- \frac{|2^{f([n])}|}{|2^{f([n])}|} = 0.\qedhere \]
\end{proof}

Nevertheless, VC finite classes of hypotheses provide us with a great wealth of severely surprising hypotheses:

\begin{propo}
Let $\HH\subset 2^{X}$ be VC finite. Then for every  $\frac{1}{2}<\epsilon<1$ there exists an $n = n(\epsilon)$ such that for every injective sample $f:\omega \to X$, if $x\in 2^{f([n])} \cap \HH$ then $\HH$ is severely surprising at level $\epsilon$.
\end{propo}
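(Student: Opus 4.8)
The plan is to collapse the three-clause definition of severe surprise down to the single uniform estimate $S(\HH,f,n) > \tfrac{1}{2}$, which in the regime $\epsilon > \tfrac{1}{2}$ is by itself strong enough to yield both the level bound and the asymmetry between $\HH$ and $\HH^c$. The observed-data (``fit'') clause $x \in \HH{\upharpoonright}_{f([n])}$ is handed to us as a hypothesis --- reading $x \in 2^{f([n])} \cap \HH$ as membership in the restriction $\HH{\upharpoonright}_{f([n])}$ --- so the only work is to establish the two inequalities $S(\HH,f,n) > 1 - \epsilon$ and $S(\HH,f,n) > S(\HH^c,f,n)$.

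First I would set $d = \delta_{VC}(\HH) < \infty$ and invoke \autoref{prop:short-wait}, whose proof rests on the Sauer--Shelah growth bound (\autoref{lem:sauer-shelah}), to select an $n = n(\epsilon)$ for which $S(\HH,f,n) > \tfrac{1}{2}$ holds for \emph{every} injective sample $f$. Concretely one takes $n$ large enough that $1 - \frac{(en)^d}{2^n d^d} > \tfrac{1}{2}$; the decisive feature inherited from \autoref{prop:short-wait} is that this bound is uniform in $f$, since the Sauer--Shelah estimate on the growth function $\tau_{\HH}$ controls $|\HH{\upharpoonright}_Y|$ over all $Y$ of a given size at once and so makes no reference to the particular enumeration. (In fact a single $n$ works for every $\epsilon > \tfrac12$, so the dependence $n(\epsilon)$ is only nominal.)

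Next I would read off the level condition. Since $\tfrac{1}{2} < \epsilon$ gives $1 - \epsilon < \tfrac{1}{2} < S(\HH,f,n)$, the inequality $S(\HH,f,n) > 1 - \epsilon$ is immediate; this is precisely where the standing assumption $\epsilon > \tfrac{1}{2}$ enters, as it forces the threshold $\tfrac{1}{2}$ to dominate the threshold $1-\epsilon$. The asymmetry condition then follows from subadditivity: by \autoref{prop:surprise_ineq}, $S(\HH,f,n) + S(\HH^c,f,n) \leq 1$, whence $S(\HH^c,f,n) \leq 1 - S(\HH,f,n) < \tfrac{1}{2} < S(\HH,f,n)$. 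Together with the assumed fit $x \in \HH{\upharpoonright}_{f([n])}$, all three clauses of severe surprise at level $\epsilon$ hold, uniformly over all samples $f$.

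I do not expect a genuine obstacle, since the quantitative content is entirely absorbed into \autoref{prop:short-wait}. The one point requiring care is the order of quantifiers --- $n$ must be chosen before, and independently of, $f$ --- but this is exactly the uniformity that \autoref{prop:short-wait} supplies. The sole conceptual step is the observation that, once $\epsilon > \tfrac12$, a one-sided surprise bound past $\tfrac12$ automatically upgrades via \autoref{prop:surprise_ineq} to the two-sided comparison $S(\HH,f,n) > S(\HH^c,f,n)$.
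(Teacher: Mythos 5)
Your proof is correct and follows essentially the same route as the paper's primary argument: uniformity over samples via \autoref{prop:short-wait} (i.e., Sauer--Shelah) for the level bound, plus subadditivity (\autoref{prop:surprise_ineq}) for the comparison $S(\HH,f,n) > S(\HH^c,f,n)$. The one substantive difference is that you anchor the choice of $n$ at the threshold $\tfrac{1}{2}$ rather than at $1-\epsilon$, and this is the logically tighter move: the paper's ``first way'' applies \autoref{prop:short-wait} at level $\epsilon$ itself, obtaining $S(\HH,f,m) > 1-\epsilon$ and hence $S(\HH^c,f,m) < \epsilon$, but for $\epsilon$ close to $1$ these two facts alone do not force $S(\HH,f,m) > S(\HH^c,f,m)$ (with $\epsilon = 0.9$, the values $S(\HH,f,m)=0.3$ and $S(\HH^c,f,m)=0.7$ satisfy both constraints and subadditivity, yet fail the comparison). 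Your version, pushing $S(\HH,f,n)$ strictly past $\tfrac{1}{2}$ so that $S(\HH^c,f,n) \leq 1 - S(\HH,f,n) < \tfrac{1}{2} < S(\HH,f,n)$, closes that gap cleanly and is exactly where the standing hypothesis $\epsilon > \tfrac{1}{2}$ earns its keep. What the paper's proof buys beyond yours is a second, quantitative argument: using the Sauer--Shelah polynomial bound on $|\HH{\upharpoonright}_{f([m])}|$ directly, it shows the ratio $S(\HH^c,f,m)/S(\HH,f,m)$ decays like $O\left(m^{\deg(\tau_{\HH})}/2^m\right)$, a much stronger separation than the bare inequality your subadditivity step delivers.
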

\begin{proof} 
By \autoref{prop:short-wait}, there exists an $n = n(\epsilon)$ such that $S(\HH,f,m) > 1-\epsilon$ for all injective $f:\omega\to X$ and for all $m>n$. 

Thus, it remains to show that $S(\HH,f,m) > S(\HH^c,f,m)$ for all $m>n$. There are two ways to see this. First, and most directly, by \autoref{prop:surprise_ineq} $S(\HH,f,m) > 1-\epsilon$ implies that $S(\HH^c,f,m) < \epsilon$. 

We can obtain better bounds, however, by the VC finiteness of $\HH$. Since $\HH$ is VC finite, without loss of generality we may assume $n$ is taken to be sufficiently large so that $|\HH{\upharpoonright}_{f([m])}| = p(m)$ for some polynomial, with $|\HH^c{\upharpoonright}_{f([m])}| \geq 2^m - p(m) \approx 2^m$. Thus not only is, $S(\HH^c,f,m) < S(\HH,f,m)$ for all $m>n$, we have in fact that 

\begin{equation*}
\begin{split}
\frac{S(\HH^c,f,m)}{S(\HH,f,m)} & =  \frac{1 - \frac{2^m-\tau_{\HH}(m) + |\HH{\upharpoonright}_{f([n])} \cap |\HH^c{\upharpoonright}_{f([n])}|}{2^m}}{1-\frac{\tau_{\HH}(m)}{2^m}} \\
& = \frac{\tau_{\HH}(m) - |\HH{\upharpoonright}_{f([n])} \cap \HH^c{\upharpoonright}_{f([n])}|}{2^m-\tau_{\HH}(m)} \\
& = O\left(\frac{\tau_{\HH}(m)}{2^m- \tau_{\HH}(m)}\right) \\
& = O\left(\frac{ m^{deg(\tau_{\HH})}}{2^m} \right)
\end{split}
\end{equation*}
noting that since
\[|\HH{\upharpoonright}_{f([n])} \cap \HH^c{\upharpoonright}_{f([n])}| \leq |\HH{\upharpoonright}_{f([n])}| = \tau_{\HH}(m) \]
we have 
\[\tau_{\HH}(m) - |\HH{\upharpoonright}_{f([n])} \cap \HH^c{\upharpoonright}_{f([n])}| \leq \tau_{\HH}(m). \qedhere \]
\end{proof} 

So, if $\HH$ is VC finite then not only is $\HH$ severely surprising to level $\epsilon$ if the data $x$ is compatible with $\HH$, but the ratio between $S(\HH,f,m)$ and $S(\HH^c,f,m)$ shrinks at a rate of $O\left(\frac{ m^{deg(\tau_{\HH})}}{2^m} \right)$.

For an explicit example, the hypothesis of $\HH$ that ``all coin flips are heads'' is VC finite. If $\HH$ is true, $\HH$ will be severely surprising to level $\epsilon$ so long as the number of flips $n$ satisfies
\[n > \log(\epsilon^{-1}),\]
and since $|\HH^c{\upharpoonright}_{f([n])}| = 2^n$ we have that
\[S(\HH^c,f,n) = 0\]
for all $f$ and $n$.

As in our discussion of the Formal Learning Theoretic account of falsification, VC finite classes emerge as a distinguished class of highly-testable hypotheses.

\section{Conclusion}

Over the course of this paper, the central importance of the notion of \textit{shattering} became clear:
\begin{enumerate} 
\item the nowhere density of a hypothesis can be defined in terms of shattering,
\item The VC finite classes are precisely the nowhere dense classes closed under elementary equivalence, and
\item The VC finite classes are uniquely suited for severe testability, as viewed through the lens of severe surprise.
\end{enumerate} 

It is no doubt that Vapnik himself---one of the originators of VC dimension---would not be surprised by the primacy of the notion of shattering. While he phrased his results primarily in terms of the equivalent notion of uniform two-sided convergence of the $ERM$ method of learning, he gives a probabilistic analogue to the above characterizations of VC finite hypotheses as \textit{the} class of effectively falsifiable hypotheses, writing that
\begin{quote}
    if for some some [hypothesis $\HH$] conditions of uniform convergence do not hold, the situation of nonfalsifiability will arise.\autocite[page 49]{vapnik2013nature}
\end{quote}

It is my hope that this chapter has bridged the gap between the Vapnikian account of probabilistic falsification as the study of VC finite classes with the combinatorial, logical, and topological accounts of falsification we have heretofore discussed.

\printbibliography

\end{document}